\newtheorem{theorem}{Theorem}[section]
\newtheorem{lemma}[theorem]{Lemma}
\newtheorem{corollary}[theorem]{Corollary}
\newtheorem{proposition}[theorem]{Proposition}
\theoremstyle{definition}
\newtheorem{definition}[theorem]{Definition}
\newtheorem{remark}[theorem]{Remark}
\numberwithin{equation}{section}
\renewcommand{\tilde}{\widetilde}
\renewcommand{\hat}{\widehat}
\renewcommand{\Pr}{\mathbb{P}}
\renewcommand{\Re}{\mathbb{R}}
\newcommand{\Ex}{\mathbb{E}}
\newcommand{\Cov}{\mathrm{cov}}
\def\thm@space@setup{%
   \thm@preskip=\parskip \thm@postskip=0pt
}
\begin{document}

\title{A central limit theorem for scaled eigenvectors of random dot product graphs}
\author{Avanti Athreya, Vince Lyzinski, David J. Marchette, \\%
Carey E. Priebe, Daniel L. Sussman, Minh Tang}
\maketitle

\begin{abstract}
  We prove a central limit theorem for the components of the largest
  eigenvectors of the adjacency matrix of a finite-dimensional random dot
  product graph whose true latent positions are unknown. In
  particular, we follow the methodology outlined in
  \citet{sussman2012universally} to construct consistent estimates for
  the latent positions, and we show that the appropriately scaled
  differences between the estimated and true latent positions converge
  to a mixture of Gaussian random variables. As a corollary, we
  obtain a central limit theorem for the first eigenvector of the
  adjacency matrix of an Erd\"os-Renyi random graph.
\end{abstract}


\section{Introduction}
\label{sec:introduction}
Spectral analysis of the adjacency and Laplacian matrices for graphs
is of both theoretical \citep{chung1997spectral} and practical
\citep{von2007tutorial} significance.  For instance, the spectrum can
be used to characterize the number of connected components in a graph
and various properties of random walks on graphs, and the eigenvector
corresponding to the second smallest eigenvalue of the Laplacian is
used in the solution to a relaxed version of the min-cut problem
\citep{fiedler1973algebraic}. In our current work, we investigate the
second-order properties of the eigenvectors corresponding to the
largest eigenvalues of the adjacency matrix of a random graph. In
particular, we show that under the random dot product graph model
\citep{young2007random}, the components of the eigenvectors are
asymptotically normal and centered around the true latent positions
(see Section~\ref{sec:MultiD}). We consider only undirected,
loop-free graphs in which the expected number of edges grows as
$\Theta(n^2)$. However, the results contained here can be
extended to sparse graphs.

This paper is organized as follows: in Section~\ref{sec:background} we
provide background and give a brief overview of related work. In
Section~\ref{sec:oned} we prove a central limit theorem for the difference between the estimated and true latent positions for the one-dimensional random dot product graph.  We present the proof of the one-dimensional case first because it illustrates, in a simpler setting, the main ideas of the proof in higher dimensions.
We then note two corollaries for special cases of random dot
product graphs. In Section~\ref{sec:MultiD} we derive the central
limit theorem for multi-dimensional random dot product
graphs, and in Section~\ref{sec:sim} we demonstrate our results via
simulation. Finally we conclude the paper in Section~\ref{sec:disc}
with further discussion.

\section{Background and Related Work}
\label{sec:background}
This work is concerned with the eigenvectors corresponding to the
largest eigenvalues of the adjacency matrix of a {\em random dot
  product graph}. Random dot product graphs are a specific
example of {\em latent position random graphs} \citep{Hoff2002}, in
which each vertex is associated with a latent position and,
conditioned on the latent positions, the presence or absence of all
edges in the graph are independent. The edge presence probability is
based on {\em link} function, which is a symmetric function of the
two latent positions.

We note briefly that, in a strong sense, latent position graphs are
identical to {\em exchangeable random graphs}
\citep{aldous1981representations,hoover1979relations}, with the key
unifying ingredient being the conditional independence of the edges.
A fundamental result on exchangeable graphs is the notion of a graph
limit which is constructed via subgraph counts
\citep{diaconis2007graph}. The work of Diaconis and Janson has important consequences in
statistical inference, for instance the method of moments for subgraph
counts \citep{bickel2011method}. In a similar spirit, our current results provide
asymptotic distributions for spectral statistics that have the promise
to improve current statistical methodology for random graphs (see \S~\ref{sec:sim}).


Statistical analysis for latent position random graphs has received
much recent interest: see \citet{goldenberg2010survey} and
\citet{Fortunato2010Community} for reviews of the pertinent
literature. Some fundamental results are found in
\citet{bickel2011method, Bickel2009, Choi2010} among many others.  In
the statistical analysis of latent position random graphs, a common
strategy is to first estimate the latent positions based on some
specified link function. For example, in random dot product graphs
\citep{young2007random}, the link function is the dot product: namely,
the edge probabilities are the dot products of the latent
positions. \citet{sussman2012universally} show that spectral
decompositions of the adjacency matrix for a random dot product graphs
provide accurate estimates of the underlying latent positions. In this
work, we extend the analysis in \citet{sussman2012universally} to show
a distributional convergence of the residuals between the estimated
and true latent positions.

Our work is also influenced by the analysis of the spectra of random
graphs \citep{chung1997spectral}. Of special note is the classic paper
of \citet{furedi1981eigenvalues}, in which the authors show that for
an Erd{\"o}s-R{\'e}nyi graph with parameter $p$, the appropriately
scaled largest eigenvalue of the adjacency matrix converges in law to
a normal distribution. Other results of this type are proved for
sparse graphs in both the independent edge model
\citep{krivelevich2003largest} and the $d$-regular random graph model
\citep{janson2005first}. More recently, general bounds for the
operator norm of the difference between the adjacency matrix and its
expectation have been proved in \citet{oliveira2009concentration} and
\citet{tropp2011freedman} (see Proposition~\ref{thm:oldBnd_oned},
Eq.~\eqref{eq:opNormBnd_oned}).

We would, of course, be remiss not to mention important recent results in random matrix theory. In particular, a
recent result by \citet{tao2012random} proves a central limit theorem
for the eigenvectors of a mean zero random symmetric matrix with
independent entries. \citet{tao2012random} prove a result for
eigenvectors corresponding to the bulk of the spectra and
\citet{knowles2011eigenvector} prove a similar result for eigenvectors
near the ``edge'' of the spectra. A material difference between these
results and our present work, however, is that we consider random
matrices whose entries have nonzero mean. For mean zero matrices, the
eigenvalues and eigenvectors that are most readily studied are not the
largest in magnitude but those in the ``bulk'' of the spectra, while
in our setting, the structure of the mean matrix eases the study of
the largest eigenvalues and their corresponding eigenvectors.

As will be seen in Section~\ref{sec:oned}, the key step in our work is
to apply the power method to the adjacency matrix, with the initial
vector the true latent position. Conditioned on the true
latent position, this produces a vector whose components are
asymptotically normally distributed.  Furthermore, the difference
between this vector and the true eigenvector of the adjacency matrix
is asymptotically negligible, due to a large gap between the largest
eigenvalue and the remaining eigenvalues.

Finally, we note that a recent paper of \citet{yan13} which provides a proof of the
asymptotic normality for maximum likelihood estimates of the
parameters of a related model, i.e. the logistic $\beta$-model, and derived the
associated Fisher information matrix. The $\beta$-model
also belongs to the class of latent position models.
It is thus of potential interests to derive the Fisher information
for the random dot product graph model as considered in this work.



\section{Central limit theorem for one-dimensional random dot product graphs}
\label{sec:oned}
In this section, we state and prove a central limit theorem for a
one-dimensional random dot product graph, defined as follows.
\begin{definition}[Random Dot Product Graph ($d$=1)]
\label{def:rdpg-1dim}
  For a distribution $F$ on $[0,1]$, we say that $(X,A)\sim
  \mathrm{RDPG}(F)$ if the following hold. Let $X_1,\dotsc, X_n
  {\sim} F$ be independent random variables and define
\begin{equation}
\label{eq:defXP-1dim}
  X=[X_1,\dotsc,X_n]^\top\in \Re^{n\times 1}\text{ and }
P=XX^\top\in [0,1]^{n\times n}.
\end{equation}
The $X_i$'s are the latent positions for the random graph with adjacency matrix $A$, where
$A\in\{0,1\}^{n\times n}$ is defined to be a symmetric, hollow matrix
such that for all $i<j$, conditioned on $X_i$ and $X_j$,
\begin{equation}
 A_{ij}\stackrel{ind}{\sim}\mathrm{Bern}(X_i X_j).
\end{equation}
\end{definition}
We remark that this one-dimensional model is a slight modification of
the {\em rank 1 inhomogeneous random graph model} studied as an
example in \citet{bollobas2007phase}.

Given $A$, it is often important to estimate $X$.  Our estimate for $X$, which we denote $\hat{X}$, is defined by $\hat{X}=\hat{\lambda}^{1/2} \hat{V}$,
where $\hat{\lambda}=\lambda_1(A)$ is the largest eigenvalue of $A$
and $\hat{V}$ its associated eigenvector, normalized to be of unit
length. We define $V=X/\|X\|$, so $V$ is the normalized true
latent positions.  Let $\delta=\Ex[X_1^2]$ be the second moment of the
latent positions.

Throughout this work, we will need explicit control on the
differences, in Frobenius norm, between $X$ and $\hat{X}$ and $V$ and
$\hat{V}$. We state here the necessary bounds in the one-dimensional
case. These bounds are special cases of the more general bounds in the
finite-dimensional setting of \S~\ref{sec:MultiD}. The proofs for the
finite-dimensional bounds are given in
\citet{sussman2012universally} and \citet{oliveira2009concentration}.
\begin{proposition}
\label{thm:oldBnd_oned}
Let $\delta=\Ex[X_1^2]$,
$V= X/\|X\|$ and $\hat{V} = \hat{X}/\|\hat{X}\|$. Let $c>0$ be arbitrary. There exists a constant $n_0(c)$ such that if $n> n_0$,  then for any $\eta$ satisfying $n^{-c} < \eta <1/2$, the following bounds hold with probability greater than
$1- \eta$,
\begin{align}
\|X-\hat{X}\| &\leq 4 \delta^{-1}\sqrt{2 \log(n/\eta)},\\
 \|V-\hat{V}\| &\leq 4 \delta^{-1}\sqrt{\frac{\log(n/\eta)}{n}}, \\
\text{ and } \|XX^\top- A\| &\leq 2 \sqrt{n \log{(n/\eta)}}.\label{eq:opNormBnd_oned}
\end{align}
Hence, the above  bounds imply that for $n$ sufficiently large, with
probability greater than $1-\eta$,
\begin{equation}
 \frac{\delta n}{2}\leq \|P\| \leq n, \quad
\frac{\delta n}{2}\leq \|A\| \leq n \label{eq:SAopnorm_oned}.
\end{equation}
where $\|\cdot \|$ represents the spectral norm for matrices and the
$\ell_2$ vector norm for vectors.
\end{proposition}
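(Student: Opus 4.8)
The first three inequalities in the statement are quoted from \citet{sussman2012universally} and \citet{oliveira2009concentration}, so the plan is to treat these as given and to derive the spectral-norm bounds in \eqref{eq:SAopnorm_oned} from them together with elementary facts. The structural observation that drives everything is that $P = X X^\top$ has rank one: its unique nonzero eigenvalue is $\|X\|^2$, so $\|P\| = \|X\|^2 = \sum_{i=1}^n X_i^2$. This converts the analysis of $\|P\|$ into that of a sum of i.i.d.\ bounded random variables, and the control of $\|A\|$ then follows by comparing $A$ to $P$ through the already-quoted operator-norm bound \eqref{eq:opNormBnd_oned}.

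For the upper bounds I would note that both are in fact deterministic. Since each $X_i \in [0,1]$ we have $X_i^2 \le 1$, whence $\|P\| = \sum_i X_i^2 \le n$. For $A$, I would use that a symmetric matrix has spectral norm bounded by its maximum absolute row sum; because $A$ is hollow with entries in $\{0,1\}$, every row sum is at most $n-1$, so $\|A\| \le n-1 \le n$.

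For the lower bounds I would first handle $\|P\|$ by concentration. The summands $X_i^2$ are i.i.d., take values in $[0,1]$, and have mean $\delta$, so Hoeffding's inequality gives $\Pr\left[\sum_i X_i^2 \le n\delta - t\right] \le \exp(-2t^2/n)$. Taking $t = n\delta/4$ yields $\|P\| \ge \tfrac{3}{4}\,n\delta$ with probability at least $1 - \exp(-n\delta^2/8)$. For $\|A\|$ I would then invoke the reverse triangle inequality together with \eqref{eq:opNormBnd_oned}: on the intersection of the two good events, $\|A\| \ge \|P\| - \|A - X X^\top\| \ge \tfrac{3}{4}\,n\delta - 2\sqrt{n \log(n/\eta)}$.

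The one step that requires care is showing that the error term $2\sqrt{n\log(n/\eta)}$ is absorbed into a constant fraction of $n\delta$ while keeping the overall probability budget at $1-\eta$. The constraint $n^{-c} < \eta$ forces $\log(n/\eta) \le (1+c)\log n$, so $2\sqrt{n\log(n/\eta)} = O(\sqrt{n\log n}) = o(n)$; hence for $n$ larger than some $n_0(c)$ this term falls below $\tfrac{1}{4}n\delta$, giving $\|A\| \ge \tfrac{3}{4}n\delta - \tfrac{1}{4}n\delta = \tfrac{1}{2}n\delta$. Finally, a union bound combines the failure probability $\eta$ of \eqref{eq:opNormBnd_oned} with the Hoeffding failure $\exp(-n\delta^2/8)$; since the latter decays faster than any polynomial while $\eta > n^{-c}$, it is negligible relative to $\eta$ for $n$ large, so the full conclusion \eqref{eq:SAopnorm_oned} holds with probability exceeding $1-\eta$. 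The main obstacle is therefore bookkeeping --- matching constants and reconciling the two probability budgets --- rather than any single hard estimate.
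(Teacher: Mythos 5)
Your overall plan coincides with the paper's treatment of this proposition: the paper gives no proof of the first three bounds, deferring them to \citet{sussman2012universally} and \citet{oliveira2009concentration} (as special cases of the $d$-dimensional bounds of Proposition~\ref{thm:oldBnd}), and it asserts the spectral-norm bounds \eqref{eq:SAopnorm_oned} as an immediate consequence. Your derivation of \eqref{eq:SAopnorm_oned} --- the rank-one identity $\|P\|=\|X\|^2=\sum_i X_i^2$, the deterministic upper bounds $\|P\|\le n$ and $\|A\|\le n$, Hoeffding's inequality for the lower bound $\|P\|\ge \tfrac34 n\delta$, and the triangle inequality $\|A\|\ge\|P\|-\|A-P\|$ combined with $\log(n/\eta)\le (1+c)\log n$ to absorb the error term --- is the natural filling-in of the paper's ``hence,'' and each of these steps is sound. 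Indeed, the lower bound on $\|P\|$ is \emph{not} a pointwise consequence of the three displayed events, so some concentration step like your Hoeffding argument is genuinely required; the paper's phrasing hides it.

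One step, as literally written, is a non sequitur, though it is easily repaired. From a failure probability of at most $\eta+\exp(-n\delta^2/8)$ you cannot conclude a failure probability less than $\eta$ merely because the exponential term is ``negligible relative to $\eta$'': the sum still exceeds $\eta$. The standard fix is to spend the budget asymmetrically --- invoke the cited bounds at level $\eta/2$ rather than $\eta$ (permissible for $n$ large since $\eta/2>n^{-(c+1)}$), which only replaces $\log(n/\eta)$ by $\log(2n/\eta)\le 2\log(n/\eta)$ and hence changes constants, and then note that $\exp(-n\delta^2/8)\le n^{-c}/2<\eta/2$ once $n\ge n_0(c,\delta)$, so the union bound gives total failure probability strictly less than $\eta$. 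With that adjustment (and the minor observation that your $n_0$ depends on $\delta$, i.e.\ on $F$, as well as on $c$, which is harmless since $F$ is fixed), your argument is complete and matches what the paper takes for granted.
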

We emphasize that a number of our subsequent arguments focus on events that occur with probability at least $1-\eta$; it is assumed that $c$ is suitably chosen and $n$ is sufficiently large to ensure that $n^{-c} \leq \eta \leq 1/2$.

Our aim in this section is to prove the following limit theorem.
\begin{theorem} \label{thm:clt_oned}
Let $(X,A)\sim \mathrm{RDPG}(F)$
  and let $\hat{X}$ be our estimate for $X$.  Let $\Phi(z, \sigma^2)$
  denote the normal cumulative distribution function, with mean zero
  and variance $\sigma^2$, evaluated at $z$. Then for each component
  $i$ and any $z \in \mathbb{R}$,
  \begin{equation*}
    \Pr\Bigl\{\sqrt{n}\bigl( \hat{X}_i - X_i \bigr) \leq z\Bigr\}
\rightarrow  \int \Phi(z, \delta^{-2}\sigma^2(x_i))dF(x_i)
  \end{equation*}
  where $\sigma^2(x_i) = x_i \Ex[X_1^3]-x_i^2\Ex[X_1^4]$ and
  $\delta=\Ex[X_1^2].$ That is, the sequence of random variables
  $\sqrt{n}( \hat{X}_i -X_i)$ converges in distribution to
  a mixture of normals.  We denote this mixture by $\mathcal{N}(0,
  \delta^{-2}\sigma^2(X_i))$.
\end{theorem}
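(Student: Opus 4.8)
The plan is to realize $\hat{X}$ as, approximately, a single normalized step of the power method applied to $A$ with the true latent vector $X$ as seed, and to extract the limit from that explicit linear functional of $A$. The identity $PX=\|X\|^2 X$ makes $\frac{(AX)_i}{\|X\|^2}$ the natural target, since for the population matrix $P$ one normalized power step returns $X$ exactly; hence the fluctuation of $\frac{(AX)_i}{\|X\|^2}$ about $X_i$ is driven entirely by the noise $A-P$. Concretely, I would prove two things: first, the approximation
\[
\sqrt{n}\bigl(\hat{X}_i - X_i\bigr) = \sqrt{n}\Bigl(\tfrac{(AX)_i}{\|X\|^2} - X_i\Bigr) + o_{\Pr}(1),
\]
and second, a conditional central limit theorem for the leading term.

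For the probabilistic core I would condition on the entire latent vector $X$ and write
\[
\frac{(AX)_i}{\|X\|^2} - X_i = -\frac{X_i^3}{\|X\|^2} + \frac{1}{\|X\|^2}\sum_{j\ne i}\bigl(A_{ij}-X_iX_j\bigr)X_j .
\]
The first term is $O(1/n)$, negligible at scale $\sqrt{n}$. The sum is, conditionally on $X$, a sum of independent, uniformly bounded, mean-zero variables with total variance $\sum_{j\ne i}X_iX_j^3(1-X_iX_j)$, which by the law of large numbers equals $n\,\sigma^2(X_i)(1+o(1))$ for $\sigma^2(x)=x\Ex[X_1^3]-x^2\Ex[X_1^4]$. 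Uniform boundedness makes Lindeberg--Feller (or Lyapunov) applicable, so conditionally on $X$ the normalized sum is asymptotically $\mathcal{N}(0,\sigma^2(X_i))$; combined with $\|X\|^2/n\to\delta$ and $\sqrt{n}/\|X\|^2=\delta^{-1}n^{-1/2}(1+o(1))$, this gives, conditionally on $X_i=x_i$, convergence of the leading term to $\mathcal{N}(0,\delta^{-2}\sigma^2(x_i))$. Integrating the bounded conditional distribution functions against $F$ by bounded convergence then yields the mixture $\int\Phi(z,\delta^{-2}\sigma^2(x_i))\,dF(x_i)$, with the degenerate case $\sigma^2(x_i)=0$ handled by the usual continuity/convention.

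The main obstacle is the approximation step, i.e.\ controlling $\sqrt{n}\bigl(\hat{X}_i-\frac{(AX)_i}{\|X\|^2}\bigr)$ at the delicate $1/\sqrt{n}$ scale for a \emph{fixed} coordinate. From $A\hat{X}=\hat{\lambda}\hat{X}$ one has $\hat{X}=\hat{\lambda}^{-1}A\hat{X}$, whence
\[
\hat{X} - \frac{AX}{\|X\|^2} = \frac{A(\hat{X}-X)}{\hat{\lambda}} + AX\Bigl(\frac{1}{\hat{\lambda}}-\frac{1}{\|X\|^2}\Bigr),
\]
and the difficulty is that the bounds of Proposition~\ref{thm:oldBnd_oned} are stated in operator/Frobenius norm and are too crude when pushed naively to a single coordinate: $\|\hat{X}-X\|$ is only $O(\sqrt{\log n})$, and Weyl's inequality gives only $|\hat{\lambda}-\|X\|^2|=O(\sqrt{n\log n})$, both of which overwhelm the target after multiplication by $\sqrt{n}$.

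The resolution rests on the large spectral gap already guaranteed by Proposition~\ref{thm:oldBnd_oned}: $\lambda_1(A)\asymp n$ while $\lambda_2(A)\le\|A-P\|=O(\sqrt{n\log n})$. Because $V=X/\|X\|$ is already within $O(\sqrt{\log n/n})$ of $\hat{V}$, one step of the power method contracts the component transverse to $\hat{V}$ by the further factor $\lambda_2/\lambda_1$, so that the normalized iterate $AV/\|AV\|$ agrees with $\hat{V}$ to within $O(\log n/n)$ in $\ell_2$, which is $o(1/\sqrt{n})$. Converting this into the coordinatewise statement that drives the two error terms below $o_{\Pr}(1/\sqrt{n})$ requires (i) delocalization, i.e.\ showing the transverse residual is spread across coordinates so that a fixed entry is of order $\|\cdot\|_2/\sqrt{n}$ rather than $\|\cdot\|_2$, and (ii) a sharpened eigenvalue concentration $\hat{\lambda}=\|X\|^2+o_{\Pr}(\sqrt{n})$ to tame the scalar normalization $\hat{\lambda}^{1/2}$. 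I expect this coordinatewise upgrade of the $\ell_2$/operator-norm control of Proposition~\ref{thm:oldBnd_oned}, rather than the conditional central limit theorem itself, to be the technically demanding part of the argument.
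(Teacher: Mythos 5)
Your overall architecture coincides with the paper's: one step of power iteration seeded at the true latent vector, a conditional Lindeberg--Feller central limit theorem for the resulting linear statistic, then a proof that the discrepancy between $\hat{X}_i$ and that statistic is $o_{\Pr}(n^{-1/2})$. The probabilistic core of your proposal (the conditional CLT, the variance computation giving $\sigma^2(x_i)$, and the integration over $F$) is correct and matches Proposition~\ref{prop:conditional_CLT} and the final step of the paper's proof. But the proposal stops exactly where the real work begins: you name (i) a coordinatewise ``delocalization'' bound and (ii) a sharpened eigenvalue concentration $\hat{\lambda} = \|X\|^2 + o_{\Pr}(\sqrt{n})$ as requirements, and then defer both. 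As written this is a genuine gap --- neither ingredient is proved, neither is routine, and your own analysis shows the theorem fails without them.

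Both gaps are filled in the paper by arguments quite different from what you gesture at. For (i), no random-matrix-style delocalization of eigenvectors is needed: since the model is invariant under relabeling of vertices, the coordinates of $Y - \hat{X}$ (with $Y = \hat{\lambda}^{-1/2}AV$) are \emph{exchangeable}, so $\Ex[n(Y_i - \hat{X}_i)^2] = \Ex[\|Y - \hat{X}\|^2]$; combining this with the Frobenius-norm bound $\|Y-\hat{X}\| \leq C\log(n/\eta)/\sqrt{n\delta^3}$ of Proposition~\ref{prop:bounds_on_Y-X} and Markov's inequality gives $\Pr[\sqrt{n}|Y_i - \hat{X}_i| > \epsilon] \leq C\log^2 n/(\epsilon^2 n) \to 0$. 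Exchangeability is the missing idea that converts the $\ell_2$ bound into a per-coordinate bound, spreading the error across coordinates at the level of second moments. For (ii), you correctly observe that Weyl gives only $O(\sqrt{n\log n})$, but you do not supply a substitute. The paper's Lemma~\ref{lem:Lyz} writes $\|X\|^2 = V^\top P V$ and $\hat{\lambda} = \hat{V}^\top A \hat{V}$ and splits $|\|X\|^2 - \hat{\lambda}| \leq |V^\top(P-A)V| + |V^\top A V - \hat{V}^\top A \hat{V}|$: the first term is a linear statistic of the $\binom{n}{2}$ independent edge variables and is $O_{\Pr}(\sqrt{\log(1/\eta)})$ by Hoeffding, while the second is at most $2\hat{\lambda}\|V-\hat{V}\|^2 = O_{\Pr}(\delta^{-2}\log(n/\eta))$ via the alignment identity $|\hat{V}^\top(V-\hat{V})| = \tfrac{1}{2}\|V-\hat{V}\|^2$ together with Eq.~\eqref{eq:7}--type eigenvector bounds. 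This yields $|\hat{\lambda} - \|X\|^2| = O_{\Pr}(\log n)$, far stronger than the $o_{\Pr}(\sqrt{n})$ you require. With these two lemmas your decomposition closes; without them, what you have is a correct plan, not a proof.
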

As an immediate consequence, we obtain the following corollary for the
eigenvectors of an Erd{\"o}s-R{\'e}nyi random graph.  For the
Erd\"os-Renyi graph, the $X_i$ have a degenerate distribution; namely,
there is some $p\in(0,1)$ such that $X_i=\sqrt{p}$ for all $i$.
\begin{corollary}\label{cor:clt_for_ER}
  For an Erd\"os-Renyi ($p$) graph, the following central limit
  theorem holds:
\[\sqrt{n}( \hat{X}_i -\sqrt{p}) \stackrel{\mathcal{L}}{\longrightarrow} \mathcal{N}(0, 1-p).\]
\end{corollary}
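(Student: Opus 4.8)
The plan is to specialize Theorem~\ref{thm:clt_oned} to the degenerate latent-position distribution, since the Erd\"os-Renyi$(p)$ model is precisely the random dot product graph in which $F$ is the point mass at $\sqrt{p}$ and every $X_i = \sqrt{p}$ almost surely. First I would verify that this choice of $F$ is admissible, i.e.\ that it satisfies the hypotheses needed to invoke Theorem~\ref{thm:clt_oned}. Because $p \in (0,1)$, the single support point $\sqrt{p}$ lies in $(0,1)$, and $\delta = \Ex[X_1^2] = p > 0$; in particular $\delta$ is bounded away from zero, which is what the concentration bounds of Proposition~\ref{thm:oldBnd_oned} require. Thus the conclusion of Theorem~\ref{thm:clt_oned} applies verbatim.

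Next I would compute the moments entering the limiting variance. Since $X_1 = \sqrt{p}$ deterministically, we have $\Ex[X_1^2] = p$, $\Ex[X_1^3] = p^{3/2}$, and $\Ex[X_1^4] = p^2$. Substituting $x_i = \sqrt{p}$ into the variance function $\sigma^2(x_i) = x_i \Ex[X_1^3] - x_i^2 \Ex[X_1^4]$ yields
\[
\sigma^2(\sqrt{p}) = \sqrt{p}\,\Ex[X_1^3] - p\,\Ex[X_1^4] = p^{2} - p^{3} = p^{2}(1-p),
\]
and therefore $\delta^{-2}\sigma^2(\sqrt{p}) = p^{-2}\cdot p^{2}(1-p) = 1-p$.

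Finally, I would observe that the mixing measure $dF$ in the conclusion of Theorem~\ref{thm:clt_oned} is itself the point mass at $\sqrt{p}$, so the mixture of normals collapses to a single Gaussian: the integral $\int \Phi(z, \delta^{-2}\sigma^2(x_i))\,dF(x_i)$ reduces to $\Phi(z, 1-p)$. This is exactly the claim that $\sqrt{n}(\hat{X}_i - \sqrt{p}) \stackrel{\mathcal{L}}{\longrightarrow} \mathcal{N}(0, 1-p)$. The corollary is a direct specialization, so there is no genuine obstacle beyond confirming admissibility of the degenerate $F$ and carrying out the elementary moment arithmetic; all of the analytic content is inherited from Theorem~\ref{thm:clt_oned}.
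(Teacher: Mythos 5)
Your proposal is correct and matches the paper's treatment: the paper presents Corollary~\ref{cor:clt_for_ER} as an immediate specialization of Theorem~\ref{thm:clt_oned} to the degenerate distribution $F = \delta_{\sqrt{p}}$, exactly as you do. Your explicit moment computation $\delta^{-2}\sigma^2(\sqrt{p}) = p^{-2}\bigl(p^{2}-p^{3}\bigr) = 1-p$ and the collapse of the mixture to a single Gaussian are precisely the (unstated) arithmetic behind the paper's claim.
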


To prove Theorem~\ref{thm:clt_oned}, we will need
Proposition~\ref{thm:oldBnd_oned} and a succession of simpler lemmas.
To begin, we apply one step of the power method, with initial vector
$V$.  In particular, let $Y=\hat{\lambda}^{-1/2}AV$ be the vector in
$\mathbb{R}^n$ with components $Y_i=\hat{\lambda}^{-1/2}
\sum_{j=1}^n A_{ij} V_j$.
\begin{proposition}\label{prop:conditional_CLT}
  Taking $X_i=x_i$ as given, we have
  \begin{equation*}
  \sqrt{n}\Bigl( Y_i -\frac{\|X\|}{\hat{\lambda}^{1/2}}x_i \Bigr) \stackrel{\mathcal{L}}{\longrightarrow} \mathcal{N}(0, \delta^{-2}\sigma^2(x_i))
  \end{equation*}
  where $\sigma^2(\cdot)$ and $\delta$ are as in Theorem~\ref{thm:clt_oned}.
\end{proposition}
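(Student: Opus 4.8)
The plan is to make the single power-method step explicit and reduce the claim to a classical central limit theorem for a sum of conditionally independent, bounded, mean-zero summands, after which the random normalizations are absorbed by Slutsky's theorem. Writing $V_j=X_j/\|X\|$ and using $A_{ii}=0$, I would record
\[
Y_i=\hat\lambda^{-1/2}\frac{1}{\|X\|}\sum_{j\ne i}A_{ij}X_j,
\qquad
\frac{\|X\|}{\hat\lambda^{1/2}}x_i=\hat\lambda^{-1/2}\frac{1}{\|X\|}\,\|X\|^2 x_i.
\]
Since $\Ex[A_{ij}\mid X]=x_iX_j$ for $j\ne i$, the conditionally natural centering is $\Ex[A_{ij}X_j\mid X]=x_iX_j^2$. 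Using $\|X\|^2 x_i=x_i\sum_{j\ne i}X_j^2+x_i^3$, I would then write
\[
\sqrt{n}\Bigl(Y_i-\tfrac{\|X\|}{\hat\lambda^{1/2}}x_i\Bigr)
=\frac{n}{\hat\lambda^{1/2}\|X\|}\cdot\frac{S_n}{\sqrt{n}}
-\sqrt{n}\,\frac{\hat\lambda^{-1/2}}{\|X\|}\,x_i^3,
\qquad
S_n:=\sum_{j\ne i}(A_{ij}-x_iX_j)X_j .
\]

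Next I would dispose of the two scalar factors using Proposition~\ref{thm:oldBnd_oned}. The strong law gives $\|X\|^2/n\to\delta$ almost surely. For the eigenvalue, Weyl's inequality combined with the operator-norm bound $\|XX^\top-A\|\le 2\sqrt{n\log(n/\eta)}$ and the identity $\lambda_1(XX^\top)=\|X\|^2$ yields $\hat\lambda/n\to\delta$ in probability, whence $\frac{n}{\hat\lambda^{1/2}\|X\|}\to\delta^{-1}$ in probability. The same estimates show the deterministic correction $\sqrt{n}\,\hat\lambda^{-1/2}\|X\|^{-1}x_i^3$ is of order $n^{-1/2}$ and so vanishes in probability. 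It then remains to prove $S_n/\sqrt{n}\stackrel{\mathcal{L}}{\longrightarrow}\mathcal{N}(0,\sigma^2(x_i))$.

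For the central limit part I would condition on the entire latent vector $X$ (with $X_i=x_i$ fixed). The summands $(A_{ij}-x_iX_j)X_j$, $j\ne i$, are then independent, mean zero, and bounded in absolute value by $1$, with conditional variance $s_n^2=\sum_{j\ne i}X_j^2\,x_iX_j(1-x_iX_j)$ satisfying $s_n^2/n\to x_i\Ex[X_1^3]-x_i^2\Ex[X_1^4]=\sigma^2(x_i)$ almost surely by the strong law. Because the summands are uniformly bounded while $s_n\to\infty$ when $\sigma^2(x_i)>0$, the Lindeberg condition holds trivially, so the Lindeberg--Feller theorem gives $S_n/s_n\stackrel{\mathcal{L}}{\longrightarrow}\mathcal{N}(0,1)$ and hence $S_n/\sqrt{n}\stackrel{\mathcal{L}}{\longrightarrow}\mathcal{N}(0,\sigma^2(x_i))$, conditionally on $X$, for almost every realization of $(X_j)_{j\ne i}$. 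To pass from this to the statement conditioned only on $X_i=x_i$, I would work with characteristic functions: the conditional characteristic function of $S_n/\sqrt{n}$ converges pointwise to $e^{-t^2\sigma^2(x_i)/2}$ and is bounded by $1$, so dominated convergence over $(X_j)_{j\ne i}$ delivers the unconditional (given $X_i=x_i$) convergence, with the degenerate case $\sigma^2(x_i)=0$ handled directly as a point mass at $0$.

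The main obstacle here is conceptual rather than computational: $S_n$ depends only on the $i$th row of $A$, whereas $\hat\lambda$ depends on the whole matrix, so the two factors in the product are genuinely dependent. This is exactly why I first force the scalar factor $\frac{n}{\hat\lambda^{1/2}\|X\|}$ to the deterministic constant $\delta^{-1}$: Slutsky's theorem then multiplies it against the weakly convergent $S_n/\sqrt{n}$ regardless of their dependence, producing $\delta^{-1}\mathcal{N}(0,\sigma^2(x_i))=\mathcal{N}(0,\delta^{-2}\sigma^2(x_i))$. The second delicate point is the nested conditioning, where a \emph{random} normalized variance $s_n^2/n$ converges to the constant $\sigma^2(x_i)$; the characteristic-function and dominated-convergence step is what makes that passage rigorous.
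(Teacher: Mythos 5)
Your proof is correct, and its skeleton---the algebraic decomposition isolating $S_n=\sum_{j\neq i}(A_{ij}-x_iX_j)X_j$, the reduction of the scalar factor $n/(\hat{\lambda}^{1/2}\|X\|)$ to $\delta^{-1}$, and the final Slutsky step---matches the paper's proof. Where you genuinely diverge is in the central limit step itself. The paper observes that, conditioned only on $X_i=x_i$, the pairs $(X_j,A_{ij})_{j\neq i}$ are i.i.d., so the summands $(A_{ij}-x_iX_j)X_j$ are themselves i.i.d.\ mean-zero variables with variance $\sigma^2(x_i)$, and the classical CLT applies in one stroke. You instead condition on the entire latent vector, invoke Lindeberg--Feller for independent but non-identically distributed bounded summands whose variance profile satisfies $s_n^2/n\to\sigma^2(x_i)$ almost surely, and then de-condition by dominated convergence applied to the (bounded) conditional characteristic functions. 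Your route is longer, but it buys a strictly stronger intermediate statement---the CLT holds conditionally on almost every realization of the latent positions, not merely on average over them---and it forces explicit treatment of the degenerate case $\sigma^2(x_i)=0$, which the paper glosses over. Two further differences are immaterial: you obtain $n/(\hat{\lambda}^{1/2}\|X\|)\to\delta^{-1}$ only in probability via Weyl's inequality (sufficient for Slutsky), where the paper asserts almost sure convergence via Borel--Cantelli; and you extract the $x_i^3$ drift as a separate $O_{\mathbb{P}}(n^{-1/2})$ term, where the paper absorbs $-x_i^3/\sqrt{n}$ into the CLT bracket. Finally, your explicit remark that Slutsky's theorem tolerates the dependence between $\hat{\lambda}$ (a whole-matrix statistic) and $S_n$ (a row-$i$ statistic) is a point the paper leaves implicit, and it is the right justification.
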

\begin{proof}
Observe that
\begin{align*}
  \sqrt{n}\Bigl(Y_i - \frac{\|X\|}{\hat{\lambda}^{1/2}}x_i\Bigr)&=\frac{\sqrt{n}}{\hat{\lambda}^{1/2}}\Bigl(\sum_{j=1}^n A_{ij} V_j - \|X\| x_i\Bigr)\\
  &=\frac{\sqrt{n}}{\hat{\lambda}^{1/2}\|X\|}\Bigl(\sum_{j=1}^n A_{ij}X_j -x_i\|X\|^2\Bigr)\\
  &=\frac{n}{\hat{\lambda}^{1/2}\|X\|} \ast \frac{1}{\sqrt{n}} \Bigl[\Bigl(\sum_{j\neq i} (A_{ij}-x_i X_j)X_j\Bigr)-x_i^3\Bigr]
\end{align*}
The scaled sum
\begin{equation*}
\frac{1}{\sqrt{n}} \Bigl(\sum_{j\neq i} (A_{ij}-x_i X_j)X_j \Bigr)
\end{equation*}
is a sum of independent, identically distributed random variables each
with mean zero and variance
$$\sigma^2(x_i)=x_i\Ex[X_j^3]-x_i^2\Ex[X_j^4].$$
The classical Lindeberg-Feller central limit theorem and Slutsky's
Theorem \citep[ Theorem 7.2.1 and Theorem 4.4.6]{chung1974course}
imply that
\begin{equation}\label{eq:stdCLT}
\frac{1}{\sqrt{n}} \Bigl[\Bigl(\sum_{j\neq i} (A_{ij}-x_i X_j)X_j\Bigr)-x_i^3\Bigr]\stackrel{\mathcal{L}}{\longrightarrow} \mathcal{N}(0, \sigma^2(x_i)).
\end{equation}
Furthermore, the Strong Law implies that $\|X\|^2/n \rightarrow \delta$,  and \eqref{eq:opNormBnd_oned} and \eqref{eq:SAopnorm_oned} in Proposition~\ref{thm:oldBnd_oned}, along with the Borel-Cantelli Lemma,
imply that $n/(\hat{\lambda}^{1/2}\|X\|)$ converges almost surely to
$\delta^{-1}$.  Finally,  another application of Slutsky's Theorem allows us to
conclude that
\begin{equation*}
 \sqrt{n}\Bigl(Y_i -\frac{\|X\|}{\hat{\lambda}^{1/2}} x_i\Bigr) \stackrel{\mathcal{L}}{\longrightarrow} \mathcal{N}(0, \delta^{-2} \sigma^2(x_i))
\end{equation*}
as desired.
\end{proof}

We remark that Proposition~\ref{thm:oldBnd_oned} and the Borel-Cantelli Lemma imply that
$n/(\hat{\lambda}^{1/2} \|X\|) \rightarrow \delta^{-1}$ with probability one, but we will need additional control of the rate of this convergence.   We use the notation $Y_n=O_{\mathbb{P}} (\alpha(n))$ to denote that  the sequence of random variables $Y_n/\alpha(n)$ is bounded in probability; and $Y_n=o_{\mathbb{P}}(\alpha(n))$ to denote that the sequence of random variables $Y_n/\alpha(n)$ converges to zero in probability.
 The next lemma shows that the factor
$(1-\|X\|/\hat{\lambda}^{1/2})=o_{\mathbb{P}}(1/\sqrt{n})$  . As a
remark, for ease of exposition, some of the subsequent results and
proofs in this paper, e.g.,
Proposition~\ref{prop:bounds_on_Y-X}, will contain bounds with
universal but hidden constants that do
not depend on the parameters---that is, they do not depend on $n$,
$\eta$ or $\delta$. We use the convention that
these hidden constants are all denoted by a generic symbol $C$ and
can change from line to line in the paper.
\begin{lemma}
  In the setting of Theorem~\ref{thm:clt_oned}, $\sqrt{n}\Bigl( 1-
    \hat{\lambda}^{-1/2}\|X\|\Bigr)\stackrel{p}{\longrightarrow}
  0$. \label{lem:Lyz}
\end{lemma}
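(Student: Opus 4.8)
The plan is to reduce the statement to a sharp estimate on the eigenvalue perturbation $\hat{\lambda}-\|X\|^2$. Since $\hat{X}=\hat{\lambda}^{1/2}\hat{V}$ with $\hat{V}$ of unit norm, we have $\hat{\lambda}^{1/2}=\|\hat{X}\|$, and $\|X\|^2=\lambda_1(P)$ is the unique nonzero eigenvalue of the rank-one matrix $P=XX^\top$. Writing
\[
1-\hat{\lambda}^{-1/2}\|X\|=1-\frac{\|X\|}{\|\hat{X}\|}=\frac{\|\hat{X}\|-\|X\|}{\|\hat{X}\|}=\frac{\hat{\lambda}-\|X\|^2}{\bigl(\|\hat{X}\|+\|X\|\bigr)\,\|\hat{X}\|},
\]
and recalling from \eqref{eq:SAopnorm_oned} that $\|X\|^2=\|P\|$ and $\|\hat{X}\|^2=\hat{\lambda}=\|A\|$ both lie in $[\delta n/2,n]$ with probability at least $1-\eta$, the denominator is $\Theta(n)$ on these events. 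Consequently $\sqrt{n}\bigl(1-\hat{\lambda}^{-1/2}\|X\|\bigr)$ equals $\sqrt{n}\,(\hat{\lambda}-\|X\|^2)$ divided by a quantity of order $n$, so it suffices to show that $\hat{\lambda}-\|X\|^2=o_{\mathbb{P}}(\sqrt{n})$; in fact I will establish the stronger bound $\hat{\lambda}-\|X\|^2=O_{\mathbb{P}}(\log n)$.

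The main obstacle is that the naive route fails: the operator-norm bound \eqref{eq:opNormBnd_oned} together with Weyl's inequality yields only $|\hat{\lambda}-\|X\|^2|\le\|A-P\|=O_{\mathbb{P}}(\sqrt{n\log n})$, which after scaling gives $O_{\mathbb{P}}(\sqrt{\log n})$ and is \emph{not} $o_{\mathbb{P}}(1)$. I must therefore exploit the rank-one structure of $P$ and the spectral gap. For the lower bound, the Rayleigh-quotient characterization gives $\hat{\lambda}\ge V^\top A V=\|X\|^2+V^\top(A-P)V$, and a direct computation (using $V=X/\|X\|$ and the fact that $A$ is hollow) shows
\[
V^\top(A-P)V=\frac{1}{\|X\|^2}\Bigl[\sum_{i\neq j}(A_{ij}-X_iX_j)X_iX_j-\sum_{i}X_i^4\Bigr].
\]
Conditioned on $X$, the first sum has mean zero and conditional variance of order $n^2$, while $\sum_i X_i^4=O_{\mathbb{P}}(n)$ by the Strong Law; since $\|X\|^2=\Theta(n)$, this gives $V^\top(A-P)V=O_{\mathbb{P}}(1)$ and hence $\hat{\lambda}\ge\|X\|^2-O_{\mathbb{P}}(1)$.

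For the matching upper bound I would project the eigenvalue equation $A\hat{V}=\hat{\lambda}\hat{V}$ onto $V$. Decompose $\hat{V}=\alpha V+\beta u$ with $u\perp V$, $\|u\|=1$, and $\alpha=V^\top\hat{V}$; choosing the sign of $\hat{V}$ so that $\alpha\ge0$, the bound $\|V-\hat{V}\|=O_{\mathbb{P}}(\sqrt{\log n/n})$ from Proposition~\ref{thm:oldBnd_oned} forces $\alpha\to1$ and $|\beta|\le\|V-\hat{V}\|=O_{\mathbb{P}}(\sqrt{\log n/n})$. Taking the inner product of $A\hat{V}=\hat{\lambda}\hat{V}$ with $V$, and using $V^\top(A-P)u=V^\top A u$ because $Pu=0$ for $u\perp V$, gives $\hat{\lambda}\,\alpha=\alpha\,V^\top A V+\beta\,V^\top(A-P)u$, whence
\[
\hat{\lambda}=V^\top A V+\frac{\beta}{\alpha}\,V^\top(A-P)u.
\]
Bounding $|V^\top(A-P)u|\le\|A-P\|=O_{\mathbb{P}}(\sqrt{n\log n})$ and combining with $\beta/\alpha=O_{\mathbb{P}}(\sqrt{\log n/n})$ makes the correction term $O_{\mathbb{P}}(\log n)$; together with $V^\top A V=\|X\|^2+O_{\mathbb{P}}(1)$ this delivers $\hat{\lambda}-\|X\|^2=O_{\mathbb{P}}(\log n)$.

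Substituting this into the first display shows that $\sqrt{n}\bigl(1-\hat{\lambda}^{-1/2}\|X\|\bigr)=O_{\mathbb{P}}(\log n/\sqrt{n})$, and since $\log n/\sqrt{n}\to0$ this converges to zero in probability, proving the lemma. The delicate point throughout is the cancellation that keeps the \emph{radial} discrepancy $\hat{\lambda}-\|X\|^2$ at order $\log n$ rather than the $\sqrt{n\log n}$ predicted by the crude operator-norm bound; this cancellation is precisely where the rank-one structure of $P$ and the large gap between $\hat{\lambda}$ and the remaining eigenvalues of $A$ are used.
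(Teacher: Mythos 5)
Your proof is correct, and it shares the paper's outer skeleton: the same algebraic identity reducing $1-\hat{\lambda}^{-1/2}\|X\|$ to $(\hat{\lambda}-\|X\|^2)$ over a denominator of order $n$, the same recognition that Weyl plus the operator-norm bound \eqref{eq:opNormBnd_oned} is insufficient, and the same target estimate $|\hat{\lambda}-\|X\|^2|=O_{\mathbb{P}}(\log n)$. Where you genuinely diverge is in how the eigenvalue perturbation is controlled. The paper writes $\hat{\lambda}=\hat{V}^\top A\hat{V}$ and expands $|V^\top AV-\hat{V}^\top A\hat{V}|\le|(V-\hat{V})^\top A(V-\hat{V})|+2\hat{\lambda}|(V-\hat{V})^\top\hat{V}|$, then invokes the identity $|\hat{V}^\top(V-\hat{V})|=\tfrac12\|V-\hat{V}\|^2$ to get the bound $2\hat{\lambda}\|V-\hat{V}\|^2\le C\delta^{-2}\log(n/\eta)$; you instead project the eigenvalue equation $A\hat{V}=\hat{\lambda}\hat{V}$ onto $V$, writing $\hat{V}=\alpha V+\beta u$ with $u\perp V$, to obtain the exact identity
\begin{equation*}
\hat{\lambda}=V^\top AV+\frac{\beta}{\alpha}\,V^\top(A-P)u,
\end{equation*}
and bound the correction by $\|V-\hat{V}\|\cdot\|A-P\|=O_{\mathbb{P}}(\log n)$. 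These are two faces of the same second-order phenomenon (your $|\beta|\le\|V-\hat{V}\|$ and $1-\alpha=\tfrac12\|V-\hat{V}\|^2$ encode exactly the paper's key identity), but your version isolates more transparently where the spectral gap enters, at the cost of needing $\alpha$ bounded away from zero and a sign convention for $\hat{V}$, both of which you handle. Two further differences: you control the Rayleigh term $V^\top(A-P)V$ by a conditional Chebyshev argument yielding $O_{\mathbb{P}}(1)$, whereas the paper uses Hoeffding's inequality to get an explicit $C\sqrt{\log(1/\eta)}+1$ bound at the stated probability level $1-\eta$ (the paper's route keeps all constants and probabilities explicit, which it reuses in Remark~\ref{rem:eigValBnd} and the $d$-dimensional Theorem~\ref{thm:eigvalBnd}); and your separate Rayleigh-quotient lower bound $\hat{\lambda}\ge V^\top AV$ is redundant, since your projection identity already gives both directions of the estimate.
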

\begin{proof}
  First, we have
  \begin{equation}
    \label{eq:3}
    1- \frac{\|X\|}{\hat{\lambda}^{1/2}}=
    \frac{\hat{\lambda}-\|X\|^2}{\hat{\lambda}^{1/2}(\|X\| +
      \hat{\lambda}^{1/2})}.
  \end{equation}
  By Proposition~\ref{thm:oldBnd_oned}, the denominator of
  Eq.~\eqref{eq:3} is bounded by
  \begin{equation*}
   \hat{\lambda}^{1/2}(\|X\| + \hat{\lambda}^{1/2}) \geq \frac{\delta n}{2}
  \end{equation*}
  with probability at least $1-\eta$. Since
\begin{equation*}
\|X\|^2 = \lambda_1(P)= V^\top P V \textrm{ and } \hat{\lambda}=\hat{V}^\top A \hat{V},
\end{equation*}
it follows that
\begin{equation}
\Bigl|\|X\|^2 - \hat{\lambda}\Bigr| \leq  |V^\top P V - V^\top  A V| + |V^\top A V - \hat{V}^\top A \hat{V}|.  \label{eq:LyzDecomp_oned}
\end{equation}
For the second term on the right-hand-side of
\eqref{eq:LyzDecomp_oned}, first observe that
\begin{align}
 |V^\top A V - \hat{V}^\top A \hat{V}| &=  |(V-\hat{V})^\top  A (V-\hat{V}) + 2(V-\hat{V})^\top A \hat{V} |\notag \\
&= |(V-\hat{V})^\top A(V-\hat{V}) + 2\hat{\lambda }(V-\hat{V})^\top \hat{V}|\notag \\
& \leq |(V-\hat{V})^\top A(V-\hat{V})| + |2\hat{\lambda }(V-\hat{V})^\top \hat{V}|\notag \\
&\leq \hat{\lambda}\|V-\hat{V}\|^2 +  2\hat{\lambda }|(V-\hat{V})^\top \hat{V}|\notag
\end{align}
Now observe that
\begin{equation}
 \label{eq:hatVV}
  |\hat{V}^\top (V-\hat{V})|= \frac{1}{2}|\hat{V}^\top V-\hat{V}^\top
  \hat{V} + V^\top \hat{V} - V^\top V| = \frac{1}{2}\|V-\hat{V}\|^2
\end{equation}
and this implies that with probability at least $1-\eta$,
\begin{equation}
 |V^\top A V - \hat{V}^\top A \hat{V}|\leq 2 \hat{\lambda} \|V-\hat{V}\|^2  \leq \frac{C \log(n/\eta)}{\delta^2}, \label{eq:lemBnd1}
\end{equation}
for some constant $C$, by Proposition~\ref{thm:oldBnd_oned}.
For the first term in Eq.~\ref{eq:LyzDecomp_oned}, we have
\begin{equation}\label{eq:Bnd_UPU_UAU_oned}
  \begin{split}
|V^\top A V-V^\top P V| &= |\sum_{i,j} (A_{ij}-P_{ij}) V_i V_j|
\\ &\leq |2\sum_{i<j} (A_{ij} -P_{ij}) V_i V_j + \sum_{i} P_{ii}
V_i^2| \\ &\leq |2\sum_{i<j} (A_{ij} -P_{ij}) V_i V_j + \sum_{i}
V_i^2| \\ &\leq |2\sum_{i<j} (A_{ij} -P_{ij}) V_i V_j + 1|
\end{split}
\end{equation}
The term $2 \sum_{i < j} (A_{ij} - P_{ij}) V_{i} V_{j}$ in Eq.\eqref{eq:evalHoef1}
is the sum of $\binom{n}{2}$ independent
random variables and we can use
Hoeffding's inequality to conclude that
\begin{equation*}
  \begin{split}
  \Pr[ |\sum_{i<j} 2 (A_{ij}-P_{ij}) V_{i} V_{j}|\geq t ] & \leq 2\exp
 \Bigl( \frac{-2t^2}{\sum_{i<j} (2V_{i} V_{j})^2}\Bigr) \\
&\leq 2\exp
 \Bigl( \frac{-t^2}{\sum_{i=1}^{n} \sum_{j=1}^{n} (V_{i}
     V_{j})^2}\Bigr) \\
&\leq 2\exp(-t^2).
  \end{split}
\end{equation*}
where the last line follows from the fact that $V$ is a unit vector.  Therefore, this bound is independent of the choice of $V$.
Hence, the first term in Eq.\eqref{eq:LyzDecomp_oned} satisfies
\begin{equation*}
\Pr[ | V^\top(A-P)V | \leq C\sqrt{\log(1/\eta)}+1 ] \geq 1-\eta.
\end{equation*}
Putting together the bounds on the two terms in
Eq.~\eqref{eq:LyzDecomp_oned} yields the bound
\begin{equation*}
  |\|X\|^{2} - \hat{\lambda} | \leq \frac{C\log{(n/\eta)}}{\delta^2}
\end{equation*}
with probabilty at least $1 - \eta$. We therefore have $1 -
\hat{\lambda}^{-1/2}\|X\| = O_{\Pr}(1/n)$ and the
claim in Lemma~\ref{lem:Lyz} follows.
\end{proof}

\begin{remark}\label{rem:eigValBnd}
  The proof of the previous lemma shows that with high
  probability, $|\hat{\lambda}-\|X\|^2|\leq C\log n$ for some $C$
  depending only on the distribution $F$. This bound is similar in
  kind to the central limit theorem, proved in
  \citet{furedi1981eigenvalues}, for the largest eigenvalue of an
  Erd\"{o}s-R\'{e}nyi random graph. \citet{alon2002concentration} also
  provide similar concentration rates for the first eigenvalue, namely
  that $|\lambda-\Ex[\lambda]|$ can be tightly controlled, which is
  somewhat different from our result. This result also greatly
  improves on the bound one obtains using only the operator norm bound
  of \citet{oliveira2009concentration} (see
  Proposition~\ref{thm:oldBnd_oned}).
\end{remark}

Finally, we prove a bound on the $\ell_2$ distance between
$\hat{\lambda}^{-1/2}AV$ and $\hat{X}$.
\begin{proposition}\label{prop:bounds_on_Y-X}
  Let $Y= \hat{\lambda}^{-1/2}AV$. Provided the events in Theorem
  ~\ref{thm:oldBnd_oned} occur, we have
\begin{equation*}
  \|Y-\hat{X}\|  \leq \frac{C\log(n/\eta)} {\sqrt{ n\delta^3}}.
\end{equation*}
\end{proposition}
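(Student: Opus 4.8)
The plan is to exploit the eigen-relation $A\hat{V}=\hat{\lambda}\hat{V}$ to collapse the difference $Y-\hat{X}$ into a single matrix--vector product. Since $\hat{X}=\hat{\lambda}^{1/2}\hat{V}=\hat{\lambda}^{-1/2}(\hat{\lambda}\hat{V})=\hat{\lambda}^{-1/2}A\hat{V}$, while $Y=\hat{\lambda}^{-1/2}AV$, we obtain the clean identity $Y-\hat{X}=\hat{\lambda}^{-1/2}A(V-\hat{V})$. It therefore suffices to control $\|A(V-\hat{V})\|$ and multiply by $\hat{\lambda}^{-1/2}\le(\delta n/2)^{-1/2}$, the lower bound on $\hat{\lambda}=\|A\|$ coming from \eqref{eq:SAopnorm_oned}. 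The naive estimate $\|A(V-\hat{V})\|\le\|A\|\,\|V-\hat{V}\|$ is far too lossy, since $\|A\|=\Theta(n)$ and it discards the fact that $V-\hat{V}$ is nearly orthogonal to the top eigenvector; the argument must instead use the rank-one structure of the mean matrix.

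Concretely, I would split $A=P+(A-P)$ with $P=XX^\top$. The key point is that $P=\|X\|^2 VV^\top$ is rank one and aligned with $V$, so $P(V-\hat{V})=\|X\|^2 V(1-V^\top\hat{V})$, and the polarization identity behind \eqref{eq:hatVV} gives $1-V^\top\hat{V}=\tfrac12\|V-\hat{V}\|^2$. Hence $\|P(V-\hat{V})\|=\tfrac12\|X\|^2\|V-\hat{V}\|^2$, so the contribution of the large rank-one part is \emph{quadratically} small in $\|V-\hat{V}\|$. For the remainder I would use $\|(A-P)(V-\hat{V})\|\le\|A-P\|\,\|V-\hat{V}\|$, where $\|A-P\|\le2\sqrt{n\log(n/\eta)}$ by \eqref{eq:opNormBnd_oned}. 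Feeding in $\|V-\hat{V}\|\le4\delta^{-1}\sqrt{\log(n/\eta)/n}$ from Proposition~\ref{thm:oldBnd_oned} renders this remainder term of order $\delta^{-1}\log(n/\eta)$.

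The step demanding the most care, and the main obstacle to recovering the stated power of $\delta$, is the $P(V-\hat{V})$ term: the quadratic gain $\|V-\hat{V}\|^2=O(\delta^{-2}\log(n/\eta)/n)$ is multiplied by $\|X\|^2$, and bounding $\|X\|^2$ by the crude $\|P\|\le n$ from \eqref{eq:SAopnorm_oned} would yield only $\delta^{-5/2}$. To reach $\delta^{-3/2}$ one must instead use $\|X\|^2=O(\delta n)$, i.e.\ that $\|X\|^2/n=\tfrac1n\sum_i X_i^2$ concentrates near its mean $\delta$. Since the $X_i^2$ lie in $[0,1]$, a Hoeffding bound (equivalently, the strong law) gives $\|X\|^2\le 2\delta n$ on an event of probability at least $1-\eta$, which I would simply append to the events of Proposition~\ref{thm:oldBnd_oned}. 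With $\|X\|^2\le2\delta n$ and $\hat{\lambda}\ge\delta n/2$, the aligned term obeys $\hat{\lambda}^{-1/2}\|P(V-\hat{V})\|=O(\delta^{-3/2}n^{-1/2}\log(n/\eta))$ and the remainder term obeys the same bound, so that $\|A(V-\hat{V})\|\le C\delta^{-1}\log(n/\eta)$ and finally $\|Y-\hat{X}\|\le C\log(n/\eta)/\sqrt{n\delta^3}$, as claimed.
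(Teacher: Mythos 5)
Your proof is correct, and it rests on the same core mechanism as the paper's: collapse $Y-\hat{X}$ into $\hat{\lambda}^{-1/2}A(V-\hat{V})$ via the eigen-relation, peel off a rank-one piece aligned with an eigen-direction so that a polarization identity converts the dangerous $\Theta(n)$-norm factor into a term \emph{quadratic} in $\|V-\hat{V}\|$, and control the remainder by the operator-norm bound $2\sqrt{n\log(n/\eta)}$. The difference is which rank-one piece gets peeled off: the paper writes $A=\hat{\lambda}\hat{V}\hat{V}^\top+E$ and applies polarization to $\hat{V}^\top(V-\hat{V})$ via Eq.~\eqref{eq:hatVV}, whereas you write $A=P+(A-P)$ and apply it to $1-V^\top\hat{V}$. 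Your choice is arguably tidier: your remainder $A-P$ is exactly what Eq.~\eqref{eq:opNormBnd_oned} controls, while the paper's remainder $A-\hat{\lambda}\hat{V}\hat{V}^\top$ requires an extra Weyl-type observation ($\|A-\hat{\lambda}\hat{V}\hat{V}^\top\|=\max(\lambda_2(A),-\lambda_n(A))\le\|A-P\|$ since $P$ has rank one) that the paper never makes explicit---indeed the paper defines $E=A-P$ but then manipulates it as if $E=A-\hat{\lambda}\hat{V}\hat{V}^\top$, an inconsistency your route avoids entirely. You also put your finger on exactly where the power $\delta^{-3/2}$ must come from: both arguments need $\|X\|^2\le C\delta n$, and the paper simply asserts $\hat{\lambda}^{1/2}\le 2\sqrt{\delta n}$, which does not follow from Proposition~\ref{thm:oldBnd_oned} alone, whereas you supply the missing Hoeffding concentration of $\sum_i X_i^2$ explicitly. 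The one caveat is that appending this extra event means your bound holds on a slightly smaller event than ``the events of Proposition~\ref{thm:oldBnd_oned}'' as the statement literally demands---but the paper's own proof has exactly the same unacknowledged dependence, so this is a defect of the statement, not of your argument.
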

\begin{proof} Let $E=A-P=A-XX^\top$. We have
  \begin{equation*}
   \begin{split}
 \|Y-\hat{X}\| &= \hat{\lambda}^{-1/2} \|A(V-\hat{V})\| \\ &=
 \hat{\lambda}^{-1/2}\|(\hat{\lambda}
 \hat{V}\hat{V}^\top+E)(V-\hat{V})\| \\ &\leq  \hat{\lambda}^{1/2} |\hat{V}^\top(V-\hat{V})| +  \hat{\lambda}^{-1/2}\|E (V-\hat{V})\|
\end{split}
  \end{equation*}
Using Eq.~\eqref{eq:hatVV} and $\sqrt{\delta
  n}/2\leq\hat{\lambda}^{1/2}\leq 2\sqrt{\delta n}$,
the term $\hat{\lambda}^{1/2} |\hat{V}^\top(V-\hat{V})|$ is bounded above by $C
\delta^{-3/2}n^{-1/2}\log(n/\eta)$. By Proposition~\ref{thm:oldBnd_oned},
$\|E\| \leq 2 \sqrt{n\log(n/\eta) }$, and therefore
\begin{equation*}
 \begin{split}
 \hat{\lambda}^{-1/2}\|E(V-\hat{V})\| &\leq
 \hat{\lambda}^{-1/2}\|E\| \ast \|(V-\hat{V})\| \leq \frac{C
   \log{(n/\eta)}}{\sqrt{n \delta^3}}
 \end{split}
\end{equation*}
from which the desired bound follows.
\end{proof}

We are now equipped to prove our limit theorem:
\begin{proof}[Proof of Theorem~\ref{thm:clt_oned}]
  Integrating over the possible realizations of $X_i$ in
  Proposition~\ref{prop:conditional_CLT} and applying the dominated
  convergence theorem, we deduce that
  \begin{equation*}
\Pr\Bigl\{\sqrt{n}\Bigl( Y_i - \frac{\|X\|}{\hat{\lambda}^{1/2}}X_i
  \Bigr) \leq z\Bigr\} \rightarrow \int \Phi(z, \delta^{-2}\sigma^2(x_i))  dF(x_i).
  \end{equation*}
This establishes that
\begin{equation*}
 \sqrt{n}\Bigl( Y_i -\frac{\|X\|}{\hat{\lambda}^{1/2}} X_i \Bigr)
\rightarrow \mathcal{N}(0, \delta^{-2}\sigma^2(X_i)).
\end{equation*}
Markov's inequality, the exchangeability of
$\{Y_i-\hat{X}_i\}_{i=1}^n$, and the bounds in
Prop.~\ref{prop:bounds_on_Y-X} allow us to conclude that
\begin{equation*}
\Pr[\sqrt{n}|Y_i - \hat{X}_i|>\epsilon] \leq
\frac{\Ex[n(Y_i-\hat{X}_i)^2]}{\epsilon^2} =
\frac{\Ex[\|Y-\hat{X}\|^2]}{\epsilon^2}\leq \frac{C\log^2{n}}{\epsilon^2 n}.
\end{equation*}
for some constant $C$ that is independent of $n$ and $\epsilon$.
Hence $\sqrt{n}(Y_i - \hat{X}_i)$ converges to zero in
probability. Observe that
\begin{equation*}
  \sqrt{n}\Bigl(\hat{X}_i -\frac{\|X\|}{\hat{\lambda}^{1/2}} X_i
  \Bigr) = \sqrt{n}
(\hat{X}_i -Y_i) + \sqrt{n}\Bigl(Y_i- \frac{\|X\|}{\hat{\lambda}^{1/2}}X_i \Bigr),
\end{equation*}
and Theorem~\ref{thm:clt_oned} follows from the convergence to zero in
probability of the first summand, the convergence in distribution to a
Gaussian mixture of
the second summand, and Lemma~\ref{lem:Lyz}.
\end{proof}

\subsection{Corollaries}\label{sec:cor}
In this section, we prove three corollaries, each of which is either a
special case or an extension of Theorem~\ref{thm:clt_oned}. First, we
demonstrate that in the stochastic blockmodel, if we condition on
$X_i=x$, the residuals converge to the correct mixture component;
second, we prove a similar result in the case where the latent
position distribution has a density and we condition on $X_i$
belonging to any set of positive $F$-measure, where $F$ is the
distribution of the latent positions; and finally, we prove a central
limit theorem for the distribution of any fixed number $k$ of the
residuals, $\sqrt{n}(\hat{X}_i -X_i)$, for $1 \leq i \leq k$.  We
begin with the first corollary, in which we obtain appropriate
convergence to the correct mixture component for the stochastic
blockmodel, as defined in the statement of Corollary~\ref{cor:sbmCLT_1d}
below.
\begin{corollary}
  \label{cor:sbmCLT_1d}
  In the setting of Theorem~\ref{thm:clt_oned}, let
  $\mathcal{X}=\mathrm{supp}(F)\subset[0,1]$ be the support of the
  distribution of the $X_i$ and suppose that
  $|\mathcal{X}|=m<\infty$. Suppose for each $x\in \mathcal{X}$, we
  have that $\Pr[X_i=x]=\pi_x>0$. Then for all $x\in\mathcal{X}$, if
  we condition on $X_i=x$, we obtain
\begin{equation}
 \label{eq:condConv2}
  \Pr\Bigl\{ n^{1/2} (\hat{X}_i - x)\leq z \mid X_i=x \Bigr\}
  {\longrightarrow}
\Phi(z, \delta^{-2}\sigma^2(x))
\end{equation}
where $\sigma^2(\cdot)$ and $\delta$ are as in Theorem~\ref{thm:clt_oned}.
\end{corollary}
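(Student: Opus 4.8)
The plan is to retrace the proof of Theorem~\ref{thm:clt_oned} verbatim, but to work throughout in the conditional probability measure $\Pr[\,\cdot \mid X_i = x]$ instead of integrating against $F$. Conditioning on a single atom $x \in \mathcal{X}$ is exactly what replaces the mixture in Theorem~\ref{thm:clt_oned} by its single component $\Phi(z,\delta^{-2}\sigma^2(x))$. As in the unconditional argument, I would begin from the three-term decomposition, valid on the event $\{X_i = x\}$,
\[
\sqrt{n}\bigl(\hat{X}_i - x\bigr) = \sqrt{n}\bigl(\hat{X}_i - Y_i\bigr) + \sqrt{n}\Bigl(Y_i - \tfrac{\|X\|}{\hat{\lambda}^{1/2}}\,x\Bigr) + x\,\sqrt{n}\Bigl(\tfrac{\|X\|}{\hat{\lambda}^{1/2}} - 1\Bigr),
\]
and handle the three summands separately.

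The middle summand requires no new work: Proposition~\ref{prop:conditional_CLT} is already stated conditional on $X_i = x$, and it gives $\sqrt{n}(Y_i - \frac{\|X\|}{\hat{\lambda}^{1/2}}x) \stackrel{\mathcal{L}}{\longrightarrow} \mathcal{N}(0,\delta^{-2}\sigma^2(x))$ under $\Pr[\,\cdot \mid X_i = x]$. This is legitimate because conditioning on $X_i = x$ leaves the remaining latent positions $\{X_j\}_{j\neq i}$ independent and identically distributed according to $F$, which is precisely the regime in which that proposition is established.

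The crux is to show the first and third summands vanish in probability under the conditional measure, and here I would exploit $\pi_x > 0$. For any event $B_n$,
\[
\Pr[B_n \mid X_i = x] = \frac{\Pr[B_n \cap \{X_i = x\}]}{\pi_x} \leq \frac{\Pr[B_n]}{\pi_x}.
\]
Applying this with $B_n = \{\sqrt{n}\,|\hat{X}_i - Y_i| > \epsilon\}$ and with $B_n = \{\sqrt{n}\,|\,\|X\|/\hat{\lambda}^{1/2} - 1\,| > \epsilon\}$, each conditional probability is at most $\pi_x^{-1}$ times the corresponding unconditional probability. But those unconditional probabilities were already driven to zero in the proof of Theorem~\ref{thm:clt_oned}, via Markov's inequality together with Proposition~\ref{prop:bounds_on_Y-X}, and in Lemma~\ref{lem:Lyz}, respectively. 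Since $\pi_x$ is a fixed positive constant and $x \in [0,1]$ is bounded, both remainder terms tend to zero in probability under $\Pr[\,\cdot \mid X_i = x]$, and a final application of Slutsky's theorem inside the conditional measure combines the three pieces to give \eqref{eq:condConv2}.

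The main, and essentially the only, obstacle is the bookkeeping around conditioning: one must check that conditioning on a positive-probability atom preserves both the conditional distributional limit of the $Y_i$ term and the convergence-to-zero-in-probability of the two remainders. The positivity $\pi_x > 0$ is exactly what makes the displayed inequality above useful, and it is the sole point at which the discreteness hypothesis of the stochastic blockmodel is used; the same inequality would be vacuous for an atomless $F$, which is why the density case must instead condition on sets of positive $F$-measure.
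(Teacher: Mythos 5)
Your proof is correct and follows essentially the same route as the paper's: the paper likewise takes the conditional Gaussian limit from Proposition~\ref{prop:conditional_CLT} and controls the remainder $\sqrt{n}(\hat{X}_i - Y_i)$ by noting that its conditional tail probability is at most the unconditional one divided by a positive atom probability (the paper uses $\pi_{\min}$ where you use $\pi_x$), with the unconditional probability being $O(\log^2 n/n)$ via Markov's inequality, Proposition~\ref{prop:bounds_on_Y-X}, and exchangeability. If anything, you are slightly more explicit than the paper, which absorbs the term $x\sqrt{n}\bigl(\|X\|/\hat{\lambda}^{1/2}-1\bigr)$ into its appeal to Slutsky's theorem rather than bounding it by the same conditioning trick as you do.
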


\begin{proof}
  Let $p_n(x,\epsilon) =\Pr(n^{1/2}|\hat{X_i}-Y_i|>\epsilon
    \mid X_i=x)$ where $Y=\hat{\lambda}^{-1/2}AV$ is as in
  Proposition~\ref{prop:bounds_on_Y-X}. By
  Proposition~\ref{prop:conditional_CLT} and Slutsky's Theorem, we
  need only show that for all $\epsilon>0$ and $x\in\mathcal{X}$,
\begin{equation}\label{eq:pn_to_0}
p_n(x,\epsilon)\to 0 \textrm{ as } n \to \infty,
\end{equation}
because this yields
\begin{equation}
  n^{1/2}\Bigl( \hat{X}_i -\frac{\|X\|}{\hat{\lambda}^{1/2}}
    x\Bigr)
 \stackrel{\mathcal{L}}{\longrightarrow} \mathcal{N}(0, \delta^{-2}\sigma^2(x)).
\end{equation}
First, by Markov's inequality, Proposition~\ref{prop:bounds_on_Y-X} ,
and the exchangeability of the sequence $\hat{X}_i-Y_i$, we have
\begin{equation*}
  \Pr\Bigl(n^{1/2}|\hat{X_i}-Y_i|>\epsilon\Bigr) \leq
\frac{\Ex[\|X-Y\|^2]}{\epsilon^2} \leq \frac{C \log^2 n}{n\epsilon^2}
\end{equation*}
for some $C>0$ depending only on the distribution $F$. Let $\pi_{\min}
= \min_{x\in\mathcal{X}} \pi_x$. We then have
\begin{equation*}
 \Pr\Bigl(n ^{1/2}|\hat{X_i}-Y_i|>\epsilon\Bigr) =
\sum_{x'\in \mathcal{X}} \pi_{x'} p_n(x',\epsilon) \geq  \pi_{\min} p_n(x,\epsilon)
\end{equation*}
for all $x\in \mathcal{X}$. This implies
$$p_n(x,\epsilon)\leq \frac{C \log^2 n}{n\pi_{\min}\epsilon^2},$$
which proves Eq.~\eqref{eq:pn_to_0}.
\end{proof}
This next corollary has essentially the same proof as the previous
one; here, however, the set of possible latent positions need not be
finite.  We condition, instead, on the true position belonging to a
fixed set $\mathcal{B}$ for which $P(X_i \in \mathcal{B})$ is
arbitrarily small but strictly positive.
\begin{corollary}\label{cor:cltCondSet}
  In the setting of Theorem~\ref{thm:clt_oned}, let $\mathcal{B}\subset [0,1]$ be such that
  $\Pr[X_i\in\mathcal{B}]>0$. If we condition on the event
  $\{X_i\in\mathcal{B}\}$, we obtain
\begin{equation}
\label{eq:condConv}
\Pr\Bigl\{n^{1/2}( \hat{X}_i -X_i) \leq z \mid X_i \in
  \mathcal{B} \Bigr\}
 {\longrightarrow} \frac{1}{\Pr\Bigl(X_i \in \mathcal{B}\Bigr)}
\int_\mathcal{B} \Phi(z, \delta^{-2}\sigma^2(x))dF(x_i)
\end{equation}
where $\sigma^2(\cdot)$ and $\delta$ are as in Theorem~\ref{thm:clt_oned}.
\end{corollary}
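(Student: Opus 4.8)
The plan is to follow the proof of Corollary~\ref{cor:sbmCLT_1d} almost verbatim, replacing the pointwise event $\{X_i = x\}$ by the positive-probability event $\{X_i \in \mathcal{B}\}$ and exploiting the elementary fact that convergence in probability to zero is preserved under conditioning on an event whose probability is bounded below by a fixed positive constant. With $Y = \hat{\lambda}^{-1/2} AV$ as in Proposition~\ref{prop:bounds_on_Y-X}, I would use the decomposition
\[
\sqrt{n}(\hat{X}_i - X_i) = \sqrt{n}(\hat{X}_i - Y_i) + \sqrt{n}\Bigl(Y_i - \tfrac{\|X\|}{\hat{\lambda}^{1/2}} X_i\Bigr) + \sqrt{n}\Bigl(\tfrac{\|X\|}{\hat{\lambda}^{1/2}} - 1\Bigr) X_i,
\]
exactly as in the proof of Theorem~\ref{thm:clt_oned}, and argue that, conditionally on $\{X_i \in \mathcal{B}\}$, the middle term converges to the asserted mixture while the first and third terms vanish in probability.

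For the middle term, Proposition~\ref{prop:conditional_CLT} gives, for each fixed $x$, that the conditional distribution function $\Pr\{\sqrt{n}(Y_i - \tfrac{\|X\|}{\hat{\lambda}^{1/2}}x) \leq z \mid X_i = x\}$ converges to $\Phi(z, \delta^{-2}\sigma^2(x))$; since the limit is continuous in $z$, this holds at every $z$. Writing the conditional law given $\{X_i \in \mathcal{B}\}$ as
\[
\frac{1}{\Pr(X_i\in\mathcal{B})} \int_{\mathcal{B}} \Pr\Bigl\{\sqrt{n}\Bigl(Y_i - \tfrac{\|X\|}{\hat{\lambda}^{1/2}} x\Bigr) \leq z \mid X_i = x\Bigr\} \, dF(x),
\]
I would pass the limit under the integral by dominated convergence (the integrands are bounded by one), obtaining precisely the right-hand side of \eqref{eq:condConv}. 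This is the analogue of the integration step in the proof of Theorem~\ref{thm:clt_oned}, now performed over $\mathcal{B}$ against the normalized measure $dF / \Pr(X_i\in\mathcal{B})$.

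For the first term, Proposition~\ref{prop:bounds_on_Y-X}, Markov's inequality, and the exchangeability of $\{\hat{X}_i - Y_i\}$ give the unconditional bound $\Pr(\sqrt{n}|\hat{X}_i - Y_i| > \epsilon) \leq C\log^2 n / (n\epsilon^2)$, exactly as in Corollary~\ref{cor:sbmCLT_1d}. Hence
\[
\Pr\bigl(\sqrt{n}|\hat{X}_i - Y_i| > \epsilon \mid X_i \in \mathcal{B}\bigr) \leq \frac{\Pr\bigl(\sqrt{n}|\hat{X}_i - Y_i| > \epsilon\bigr)}{\Pr(X_i\in\mathcal{B})} \leq \frac{C\log^2 n}{n\epsilon^2 \, \Pr(X_i\in\mathcal{B})} \to 0.
\]
The same observation disposes of the third term: Lemma~\ref{lem:Lyz} gives $\sqrt{n}(1 - \hat{\lambda}^{-1/2}\|X\|) \to 0$ in probability unconditionally, and since $|X_i| \leq 1$ the third summand tends to zero unconditionally, hence also conditionally on $\{X_i \in \mathcal{B}\}$. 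Combining the three pieces by Slutsky's theorem yields \eqref{eq:condConv}.

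The only point demanding care---rather than a genuine obstacle---is the transfer of each ``negligible in probability'' statement to the conditional setting. The mechanism is the same throughout: for any event $\mathcal{A}$ with $\Pr(\mathcal{A})$ a fixed positive number, $\Pr(\,\cdot \mid \mathcal{A}) \leq \Pr(\,\cdot\,)/\Pr(\mathcal{A})$, so a bound tending to zero remains a bound tending to zero after division by the constant $\Pr(\mathcal{A})$. This is exactly where the hypothesis $\Pr[X_i \in \mathcal{B}] > 0$ enters; if $\Pr(X_i \in \mathcal{B})$ were permitted to shrink with $n$, the bounds above would no longer force the conditional probabilities to zero, and the argument would break down.
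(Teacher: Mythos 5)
Your proposal is correct and takes essentially the same route as the paper: the paper offers no separate proof for this corollary, stating only that it ``has essentially the same proof as the previous one,'' i.e.\ Corollary~\ref{cor:sbmCLT_1d}, whose Markov-inequality/exchangeability bound on $\Pr(n^{1/2}|\hat{X}_i - Y_i| > \epsilon)$ you reuse, with the division by $\pi_{\min}$ replaced by division by $\Pr(X_i \in \mathcal{B})$ and the finite sum over the support replaced by a dominated-convergence integration of Proposition~\ref{prop:conditional_CLT} over $\mathcal{B}$. Your explicit handling of the third summand via Lemma~\ref{lem:Lyz} and the observation that convergence in probability survives conditioning on a fixed positive-probability event fills in details the paper leaves implicit, but it is the same argument.
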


In other words, if we condition on an event of positive probability,
the convergence in distribution is to a mixture of normals where the
mixture is over only the conditioned event.  Our last corollary
asserts that our main theorem can be extended to distributional
convergence of any finite collection of estimated latent
positions. Indeed, we prove that for any finite collection
$\{i_1,\dotsc, i_{K}\}$, the residuals
$n^{1/2}(\hat{X}_{i_k}-X_{i_k})$ are asymptotically jointly normal and
asymptotically uncorrelated, and hence asymptotically independent.
\begin{corollary}\label{cor:cltK}
  Suppose $X$ and $\hat{X}$ are as in Theorem~\ref{thm:clt_oned}. Let
  $k\in\mathbb{N}$ be any fixed positive integer; let $i_1,
  \dotsc,i_K\in \mathbb{N}$ be any fixed set of indices and let $z_1,
  z_2, \cdots, z_K \in \mathbb{R}$ be fixed.  Then
\begin{equation}
\lim\limits_{n \rightarrow \infty} \Pr\Bigl[ \bigcap_{k=1}^{K} \{n^{1/2}(\hat{X}_{i_k}
-X_{i_k}) \leq z_k\}\Bigr]=
\prod_{k=1}^{K} \int_{\mathcal{X}} \Phi(z_k, \delta^{-1}\sigma^2(x_k)) dF(x_k)
\end{equation}
where $\Phi(\cdot, \sigma^2)$ denotes the cumulative distribution
function (cdf) for a normal with mean zero and variance $\sigma^2$.
Again, $\sigma^2(\cdot)$ and $\delta$ are as in
Theorem~\ref{thm:clt_oned}.

In other words, for any finite collection of indices, the residuals
between $\hat{X}$ and $X$ converge to independent mixtures of
multivariate normals which we will denote $\mathcal{N}(0,
\delta^{-2}\sigma^2(X_j))$:
\begin{equation}
\begin{pmatrix}
n^{1/2}(\hat{X}_{i_1} -X_{i_1}) \\
n^{1/2}(\hat{X}_{i_2} -X_{i_2}) \\
\vdots \\
n^{1/2}(\hat{X}_{i_K} -X_{i_K})
\end{pmatrix} \stackrel{\mathcal{L}}{\longrightarrow} \bigotimes_{k=1}^K \mathcal{N}(0, \delta^{-1}\sigma^2(X_{i_k}))\text{ as } n\to \infty.\label{eq:clt2}
\end{equation}
\label{thm:clt}
\end{corollary}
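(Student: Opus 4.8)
The plan is to extend the single-index argument of Theorem~\ref{thm:clt_oned} to the joint distribution of $K$ residuals by reducing everything, as before, to the power-method vector $Y = \hat{\lambda}^{-1/2} A V$, and then to establish \emph{joint} asymptotic normality of the corresponding components $Y_{i_1}, \dotsc, Y_{i_K}$. The key observation is that the negligibility step carries over verbatim: by Proposition~\ref{prop:bounds_on_Y-X}, Markov's inequality, and the exchangeability of $\{Y_i - \hat{X}_i\}$, each coordinate satisfies $\sqrt{n}(\hat{X}_{i_k} - Y_{i_k}) \stackrel{p}{\to} 0$, and a union bound over the fixed finite index set $\{i_1, \dotsc, i_K\}$ shows that the vector of differences converges to zero in probability. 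Likewise, Lemma~\ref{lem:Lyz} lets us replace the centering $\frac{\|X\|}{\hat{\lambda}^{1/2}} X_{i_k}$ by $X_{i_k}$ at cost $o_{\mathbb{P}}(1/\sqrt n)$ in each of the finitely many coordinates. Hence it suffices to prove the joint convergence of $\sqrt{n}\bigl(Y_{i_k} - \tfrac{\|X\|}{\hat{\lambda}^{1/2}} X_{i_k}\bigr)$, $k=1,\dotsc,K$, to the claimed product of Gaussian mixtures.

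For the joint convergence, first condition on $X_{i_1} = x_1, \dotsc, X_{i_K} = x_K$. Reusing the algebra in the proof of Proposition~\ref{prop:conditional_CLT}, each centered-and-scaled coordinate is, up to the almost-surely-convergent prefactor $n/(\hat{\lambda}^{1/2}\|X\|) \to \delta^{-1}$, equal to
\begin{equation*}
\frac{1}{\sqrt n} \Bigl[\Bigl(\sum_{j \neq i_k} (A_{i_k j} - x_k X_j) X_j\Bigr) - x_k^3\Bigr].
\end{equation*}
The plan is to show these $K$ scaled sums are \emph{jointly} asymptotically normal with a \emph{diagonal} limiting covariance. The leading contribution to the $k$-th sum comes from the summands indexed by $j \notin \{i_1, \dotsc, i_K\}$; for such $j$ the random variables $A_{i_1 j}, \dotsc, A_{i_K j}$ are conditionally independent given $X_j$, so the cross-covariance between the $k$-th and $\ell$-th sums ($k \neq \ell$) is built only from the finitely many shared terms $j \in \{i_1, \dotsc, i_K\}$ plus the terms $A_{i_k i_\ell}$. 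After dividing by $n$, these $O(1)$-many contributions vanish, so the off-diagonal entries of the limiting covariance are zero. The diagonal entries are $\sigma^2(x_k)$ exactly as in Proposition~\ref{prop:conditional_CLT}. A multivariate Lindeberg--Feller central limit theorem (applied to the vector-valued triangular array) then gives, conditionally,
\begin{equation*}
\sqrt n\begin{pmatrix} Y_{i_1} - \tfrac{\|X\|}{\hat{\lambda}^{1/2}} x_1 \\ \vdots \\ Y_{i_K} - \tfrac{\|X\|}{\hat{\lambda}^{1/2}} x_K \end{pmatrix} \stackrel{\mathcal{L}}{\longrightarrow} \mathcal{N}\bigl(0, \, \delta^{-2}\,\mathrm{diag}(\sigma^2(x_1), \dotsc, \sigma^2(x_K))\bigr),
\end{equation*}
which is a product of $K$ independent one-dimensional Gaussians with the correct variances. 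Finally, integrating this conditional statement against the product law $dF(x_1)\cdots dF(x_K)$ of the independent latent positions $X_{i_1}, \dotsc, X_{i_K}$ --- and invoking dominated convergence as in the proof of Theorem~\ref{thm:clt_oned} --- yields the product-of-mixtures limit in Eq.~\eqref{eq:clt2}.

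I expect the main obstacle to be the careful bookkeeping of the cross-covariance terms: one must verify rigorously that the correlation induced by the shared indices (the terms involving $A_{i_k i_\ell}$ and the overlapping summands $X_{i_\ell}$) is indeed $O(1/n)$ and therefore asymptotically negligible, so that the limiting covariance is genuinely diagonal. This is the step where the conditional independence structure of the random dot product graph is essential, and where an imprecise treatment could spuriously introduce off-diagonal mass. A secondary technical point is that the multivariate Lindeberg condition must be checked, but this follows from the uniform boundedness of the summands (the $X_j$ lie in $[0,1]$ and the $A_{i_k j}$ are Bernoulli), exactly as in the one-dimensional case; likewise the passage from conditional to unconditional convergence requires only that the mixing distribution is a genuine probability measure, which the dominated convergence argument handles.
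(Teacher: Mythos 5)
Your proposal is correct and follows essentially the same route as the paper's own (sketched) proof: both reduce to the power-method vector $Y=\hat{\lambda}^{-1/2}AV$, show that the conditional cross-covariances between the $K$ scaled sums vanish at rate $O(1/n)$ because conditional edge independence kills all but the $O(1)$ shared-index terms, and then conclude via a multivariate CLT together with Slutsky's theorem and the negligibility bounds of Proposition~\ref{prop:bounds_on_Y-X} and Lemma~\ref{lem:Lyz}. The only cosmetic difference is that the paper phrases the multivariate convergence through the Cram\'{e}r--Wold device while you invoke a vector-valued Lindeberg--Feller CLT directly; these are interchangeable here.
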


\begin{proof}[sketch]
  The proof essentially follows from an application of the
  Cram\'{e}r-Wold theorem. For ease of notation, we consider the $K=2$
  case, where we can take $i\neq i'$ and condition on $X_i=x_i$ and
  $X_{i'}=x_i$; the case for general $K$ follows similarly. This
  simplifies the covariance computation to the following:
\begin{align*}
  \Cov\Bigl\{\frac{1}{n^{1/2}} \Bigl(-x_i^3 + \sum_{j\neq i}^n
        \xi_{ij} \Bigr), \frac{1}{n^{1/2}}
    \Bigl(-x_{i'}^3 + \sum_{j'\neq i'}^n \xi_{i'j'} \Bigr)\Bigr\}
  = \frac{1}{n}\sum_{j=1}^n \sum_{j'=1}^n \Cov(\xi_{ij}, \xi_{i'j'})
\end{align*}
where $\xi_{ij} = (A_{ij} - x_i X_j) X_j$.
Now, if $j,j'\notin\{i,i'\}$, the summand is zero because the terms
are independent. This leaves only $4$ possible non-zero summands, so
the covariance is bounded by $4/n$.  The remainder of the
proof follows \emph{mutatis mutandis} as a consequence of Slutsky's
Theorem and the bounds established in
Proposition~\ref{thm:oldBnd_oned} and
Proposition~\ref{prop:bounds_on_Y-X}.
\end{proof}

\section{Central limit theorem for finite-dimensional random dot
  product graphs}
\label{sec:MultiD}

In this section, we consider general finite-dimensional random dot
product graphs and prove a central limit theorem for the scaled
differences between the estimated and true latent positions. We begin
with the construction of our estimate for the underlying latent
positions.

\begin{definition}[Random Dot Product Graph ($d$-dimensional)]
\label{def:rdpg}
  Let $F$ be a distribution on a set $\mathcal{X}\subset \Re^d$
  satisfying $\langle x, x'\rangle \in[0,1]$ for all $x,x'\in
  \mathcal{X}$. We say $(X,A)\sim \mathrm{RDPG}(F)$ if the following hold. Let
  $X_1,\dotsc, X_n {\sim} F$ be independent random variables and define
\begin{equation}
\label{eq:defXP}
X=[X_1,\dotsc,X_n]^\top\in \Re^{n\times d}\text{ and }P=XX^\top\in [0,1]^{n\times n}.
\end{equation}
The $X_i$ are the latent positions for the random graph.  The matrix
$A\in\{0,1\}^{n\times n}$ is defined to be a symmetric, hollow matrix
such that for all $i<j$, conditioned on $X_i,X_j$,
\begin{equation}
 A_{ij}\stackrel{ind}{\sim}\mathrm{Bern}(X_i^\top X_j).
\end{equation}
\end{definition}

As in Section~\ref{sec:oned}, we seek to demonstrate an asymptotically
normal estimate of $X$, the matrix of latent positions $X_1,\dotsc,
X_n$. However, the model as specified above is
non-identifiable: if $W\in \Re^{d\times d}$ is orthogonal, then $XW$
generates the same distribution over adjacency matrices. As a result,
we will often consider {\em uncentered} principal components (UPCA) of
$X$.

\begin{definition}[UPCA]
  Let $X$ and $P$ be as in Definition~\ref{def:rdpg}. Then $P$ is
  symmetric and positive semidefinite and has rank at most $d$. Hence,
  $P$ has an spectral decomposition $P=V S V^\top$
  where $V\in \Re^{n\times d}$ has orthonormal columns and $S$ is
  diagonal with positive decreasing entries along the diagonal.  The
  UPCA of $X$ is then $V S^{1/2}$ and $V S^{1/2}= X
  W_n$ for some random orthogonal matrix $W_n\in \Re^{d\times d}$.  We
  will denote the UPCA of $X$ as $\tilde{X}$.
\label{def:upca}
\end{definition}

\begin{remark}
  We denote the second moment matrix for $X_i$ by $\Delta
  =\Ex[X_iX_i^\top]$. We assume, without loss of generality, that
  $\Delta =\mathrm{diag}(\delta_1,\dotsc,\delta_d)$, i.e.,
  $\Delta$ is diagonal. For the remainder of this work we will assume
  that the eigenvalues of $\Delta$ are distinct and positive, so that
  $\Delta$ has a strictly decreasing diagonal. This is a
  mild restriction, and we impose it for technical reasons; namely, it is sufficient to ensure the requisite bounds in Lemma~\ref{lem:VThatV} below.
\end{remark}

Our estimate for $X$, or specifically our estimate for the UPCA of
$X$, is a spectral embedding defined below, which is
once again motivated by the observation that $A$ is essentially a noisy version
of $P$.

\begin{definition}[Embedding of $A$]
  Suppose that $A$ is as in Definition~\ref{def:rdpg}. Let
  $A= U_A S_A U_A^\top$ be the (full) spectral
  decomposition of $A$. Then our estimate for the UPCA of $X$ is
  $\hat{X}=\hat{V} \hat{S}^{1/2}$, where $\hat{S}\in\Re^{d\times d}$ is the
  diagonal matrix with the $d$ largest eigenvalues (in magnitude) of
  $A$ and $\hat{V}\in \Re^{n\times d }$ is the matrix with orthonormal
  columns of the corresponding eigenvectors.
\label{def:emb}
\end{definition}

Similar to the one-dimensional case, we will again need explicit control on
the differences, in Frobenius norm, between $\tilde{X}$ and $\hat{X}$,
as well as $\hat{V}$ and $V$.  These are the finite-dimensional
analogues of the bounds in Proposition~\ref{thm:oldBnd_oned} and are
proven in \citet{sussman2012universally} and
\citet{oliveira2009concentration}.
\begin{proposition}
\label{thm:oldBnd}
Suppose $X$, $A$, $\tilde{X}$ and $\hat{X}$ are as defined
above. Recall that $\delta_d$ denotes the smallest eigenvalue of
$\Delta=\Ex[X_1X_1^\top]$ . Let $c$ be arbitrary. There exists a constant $n_0(c)$ such that if $n> n_0$,  then for any $\eta$ satisfying $n^{-c} < \eta <1/2$, the following bounds hold with probability greater than
$1- \eta$,
\begin{align}
  \label{eq:6}
\|\hat{X}-\tilde{X}\|_F = \|\hat{V} \hat{S}^{1/2}-V S^{1/2}\|_F &
\leq  4 \delta_{d}^{-1} \sqrt{ 2 d \log (n/\eta)},\\
\label{eq:7}
 \|\hat{V}-V\|_F &\leq 4 \delta_{d}^{-1} \sqrt{\frac{2 d \log(n/\eta)}{n}},
\\
\text{ and } \|XX^\top- A\| &\leq 2 \sqrt{n \log(n/\eta)}. \label{eq:opNormBnd}
\end{align}
Hence, for $n$ sufficiently large, the events above imply that
with probability greater than $1-\eta$,
\begin{equation}
 \frac{\delta_d n}{2}\leq \|S\| \leq n \text{ and }
\frac{\delta_d n}{2}\leq \|\hat{S}\| \leq n. \label{eq:SAopnorm}
\end{equation}
\end{proposition}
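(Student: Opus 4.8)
The plan is to treat the operator-norm concentration bound \eqref{eq:opNormBnd} as the single genuinely probabilistic input and to derive everything else by deterministic matrix perturbation theory, exactly mirroring the one-dimensional Proposition~\ref{thm:oldBnd_oned}. First I would establish \eqref{eq:opNormBnd}: the entries $\{A_{ij}-P_{ij}\}_{i<j}$ are independent, mean zero, and bounded in $[-1,1]$, so the symmetric matrix $A-P$ is a sum of independent mean-zero random matrices, and the matrix Bernstein/Freedman inequalities of \citet{oliveira2009concentration} and \citet{tropp2011freedman} yield $\|A-P\|\le 2\sqrt{n\log(n/\eta)}$ with probability at least $1-\eta$, once $n$ exceeds the threshold $n_0(c)$ needed to absorb the constants.

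Next I would pin down the spectrum of $P$. Since $P=XX^\top$ shares its nonzero eigenvalues with $X^\top X=\sum_i X_iX_i^\top$, a law-of-large-numbers (or matrix-concentration) argument shows $\tfrac1n X^\top X\to\Delta=\mathrm{diag}(\delta_1,\dots,\delta_d)$, so the $d$ nonzero eigenvalues of $P$ concentrate around $n\delta_1>\dots>n\delta_d$ while the remaining $n-d$ eigenvalues are exactly zero. This gives the bounds \eqref{eq:SAopnorm} on $\|S\|$, and Weyl's inequality transfers them to $\|\hat{S}\|$, because $|\lambda_i(A)-\lambda_i(P)|\le\|A-P\|=O(\sqrt{n\log(n/\eta)})$ is of lower order than the $\Theta(n)$ eigenvalues themselves.

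The eigenvector bound \eqref{eq:7} is where the assumption that $\Delta$ has distinct eigenvalues does its work. The gap between $\lambda_d(P)=\Theta(n\delta_d)$ and $\lambda_{d+1}(P)=0$ is $\Theta(n)$, so a Davis--Kahan $\sin\Theta$ argument applied to the top-$d$ invariant subspace gives $\|\hat{V}-VO\|_F\lesssim \sqrt{d}\,\|A-P\|/(n\delta_d)\lesssim \delta_d^{-1}\sqrt{d\log(n/\eta)/n}$ for some orthogonal $O\in\Re^{d\times d}$; distinctness of the $\delta_i$ reduces $O$ to a diagonal sign matrix, and after fixing the column signs of $V$ and $\hat{V}$ to agree, one drops it entirely to obtain \eqref{eq:7}. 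Finally, for \eqref{eq:6} I would split $\hat{V}\hat{S}^{1/2}-VS^{1/2}=(\hat{V}-V)\hat{S}^{1/2}+V(\hat{S}^{1/2}-S^{1/2})$: the first term is bounded by $\|\hat{V}-V\|_F\,\|\hat{S}^{1/2}\|\lesssim \delta_d^{-1}\sqrt{d\log(n/\eta)}$ via \eqref{eq:7} and \eqref{eq:SAopnorm}, and the second by $\|\hat{S}^{1/2}-S^{1/2}\|_F$, each diagonal entry of which satisfies $|\hat{\lambda}_i^{1/2}-\lambda_i^{1/2}|=|\hat{\lambda}_i-\lambda_i|/(\hat{\lambda}_i^{1/2}+\lambda_i^{1/2})\lesssim \sqrt{n\log(n/\eta)}/\sqrt{\delta_d n}$, so the sum over the $d$ coordinates is again $O(\sqrt{d\log(n/\eta)})$.

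The main obstacle is the operator-norm concentration bound \eqref{eq:opNormBnd}: it is the only step requiring probabilistic machinery, and the one carrying the sharp dependence on $n$ and $\eta$, whereas everything downstream is deterministic linear algebra driven by the $\Theta(n)$ eigenvalue gap. A secondary subtlety, rather than a difficulty, is the bookkeeping of the orthogonal alignment matrix in Davis--Kahan; this is exactly what the distinct-eigenvalue hypothesis on $\Delta$ is imposed to control, as anticipated in the remark preceding the statement. Since all of these ingredients are available in \citet{oliveira2009concentration} and \citet{sussman2012universally}, the proof amounts to assembling them with the constants tracked to match the stated right-hand sides.
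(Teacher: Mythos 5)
The first thing to note is that the paper does not actually prove Proposition~\ref{thm:oldBnd}: it imports the bounds wholesale from \citet{sussman2012universally} and \citet{oliveira2009concentration}, which are precisely the two sources you lean on. So your sketch is a reconstruction of the cited proofs rather than an alternative to anything in the text, and its architecture --- matrix concentration for \eqref{eq:opNormBnd}, quantitative concentration of $n^{-1}X^\top X$ around $\Delta$ plus Weyl for \eqref{eq:SAopnorm}, Davis--Kahan for \eqref{eq:7}, and the splitting $\hat{V}\hat{S}^{1/2}-VS^{1/2}=(\hat{V}-V)\hat{S}^{1/2}+V(\hat{S}^{1/2}-S^{1/2})$ for \eqref{eq:6} --- is indeed the standard route those references take. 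In that sense the proposal is correct in outline.

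Three points deserve more care than you give them, the first being substantive. Your one-line claim that ``distinctness of the $\delta_i$ reduces $O$ to a diagonal sign matrix'' is the crux of \eqref{eq:7}, and the way you would naturally execute it --- per-eigenvector Davis--Kahan --- bounds the within-subspace rotation by $\|A-P\|/\bigl(n\min_i(\delta_i-\delta_{i+1})\bigr)$ (with $\delta_{d+1}=0$), so the constant you obtain depends on the \emph{minimal eigengap} of $\Delta$, not only on $\delta_d$ as the stated bound requires. The repair is to note that the off-diagonal entries of $V^\top\hat{V}$ have numerators of the form $V_{\cdot i}^\top(A-P)\hat{V}_{\cdot j}$, which by Hoeffding-type concentration of bilinear forms are $O(\sqrt{\log(n/\eta)})$ --- a factor $\sqrt{n}$ smaller than $\|A-P\|$ --- so the within-subspace rotation is lower order regardless of how small the gaps are, and only the component orthogonal to the top-$d$ subspace (controlled by $\delta_d$ alone) survives at the leading order. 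This is exactly the argument the paper runs later in Lemma~\ref{lem:VThatV}, following \citet{sarkar13:_role_spect}; you need a version of it already here if you want the constants as stated. Second, a small slip: Weyl's inequality gives $\|\hat{S}\|\leq n+2\sqrt{n\log(n/\eta)}$, which overshoots the stated $\|\hat{S}\|\leq n$; the clean fix is the deterministic bound $\|A\|\leq\max_i\deg(i)\leq n-1$, valid for any adjacency matrix. Third, $A$ is hollow while $P_{ii}=\|X_i\|^2\neq 0$, so $P$ is not exactly $\Ex[A\mid X]$; the diagonal discrepancy contributes at most $1$ to $\|A-P\|$ and must be (and easily is) absorbed into the constants when invoking \citet{oliveira2009concentration}.
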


We also need the following extension to Lemma~\ref{lem:Lyz} for
bounding the difference between the $d$ largest eigenvalues of $A$ and
the corresponding eigenvalues of $P$.
\begin{theorem}\label{thm:eigvalBnd}
  In the setting of Proposition~\ref{thm:oldBnd}, with probability
  greater than $1-2\eta$
\begin{align*}
	\|S- \hat{S}\|_F &\leq C \delta_{d}^{-2} d \log(n/\eta)
	\end{align*}
\end{theorem}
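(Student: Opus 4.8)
The plan is to mimic the decomposition behind Lemma~\ref{lem:Lyz}, but to exploit crucially that $S-\hat{S}$ is a difference of two \emph{diagonal} matrices. Since the columns of $V$ and $\hat{V}$ are eigenvectors of $P$ and $A$ respectively, we have $V^\top P V = S$ and $\hat{V}^\top A\hat{V}=\hat{S}$, so I would start from
\[
 S-\hat{S}=V^\top P V-\hat{V}^\top A\hat{V}=V^\top(P-A)V+\bigl(V^\top A V-\hat{V}^\top A\hat{V}\bigr),
\]
and expand the second parenthesis using $V=\hat{V}+(V-\hat{V})$ together with $A\hat{V}=\hat{V}\hat{S}$:
\[
 V^\top A V-\hat{V}^\top A\hat{V}=\hat{S}M+M^\top\hat{S}+(V-\hat{V})^\top A(V-\hat{V}),\qquad M:=\hat{V}^\top(V-\hat{V}).
\]

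The naive route---bounding each term in Frobenius norm---is too lossy: the off-diagonal entries of $\hat{S}M+M^\top\hat{S}$ are of order $\sqrt{n\log(n/\eta)}$, because $\|\hat{S}\|=\Theta(n)$ while $M$ has off-diagonal entries only of order $\|V-\hat{V}\|_F$. The heart of the argument, and the key observation, is that since $S-\hat{S}$ is diagonal I need only the \emph{diagonal} of the right-hand side, where this dangerous off-diagonal mass disappears. Concretely, the $k$-th diagonal entry of $\hat{S}M+M^\top\hat{S}$ is $2\hat{S}_{kk}M_{kk}$, and the multidimensional analogue of Eq.~\eqref{eq:hatVV}---namely $M_{kk}=\hat{V}_{\cdot k}^\top(V_{\cdot k}-\hat{V}_{\cdot k})=-\tfrac12\|V_{\cdot k}-\hat{V}_{\cdot k}\|^2$ for the $k$-th columns---shows this equals $-\hat{S}_{kk}\|V_{\cdot k}-\hat{V}_{\cdot k}\|^2$. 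Hence $\|\mathrm{diag}(\hat{S}M+M^\top\hat{S})\|_F\le\|\hat{S}\|\,\|V-\hat{V}\|_F^2$, which by Eq.~\eqref{eq:7} and Eq.~\eqref{eq:SAopnorm} is $O(\delta_d^{-2}d\log(n/\eta))$.

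I would dispose of the two remaining contributions similarly. The term $(V-\hat{V})^\top A(V-\hat{V})$ is bounded in full Frobenius norm by $\|A\|\,\|V-\hat{V}\|_F^2=O(\delta_d^{-2}d\log(n/\eta))$, using $\|A\|\le\|P\|+\|A-P\|=O(n)$ from Eq.~\eqref{eq:opNormBnd}--\eqref{eq:SAopnorm}. For $\mathrm{diag}(V^\top(P-A)V)$, each diagonal entry $V_{\cdot k}^\top(P-A)V_{\cdot k}$ is, conditionally on the latent positions, a sum of $\binom{n}{2}$ independent bounded mean-zero terms plus the hollowness correction $-\sum_i P_{ii}V_{ik}^2$ of absolute value at most $1$; exactly as in Eq.~\eqref{eq:Bnd_UPU_UAU_oned}, Hoeffding's inequality bounds each entry by $O(\sqrt{\log(n/\eta)})$, and a union bound over the $d$ diagonal entries (whose extra $\eta$ accounts for the probability $1-2\eta$ in the statement) gives $\|\mathrm{diag}(V^\top(P-A)V)\|_F=O(\sqrt{d\log(n/\eta)})$. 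Collecting the three estimates by the triangle inequality, and noting that the $\log(n/\eta)$ terms dominate $\sqrt{\log(n/\eta)}$ while $\delta_d^{-2}\ge1$, the middle contribution is dominant and yields $\|S-\hat{S}\|_F\le C\delta_d^{-2}d\log(n/\eta)$.

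The main obstacle is precisely the one flagged above: a term-by-term Frobenius bound fails because of the large spectral gap encoded in $\|\hat{S}\|=\Theta(n)$, and the proof only succeeds once one recognizes that the diagonal structure of $S-\hat{S}$ annihilates the offending commutator-type off-diagonal entries, reducing everything to the squared eigenvector perturbation $\|V-\hat{V}\|_F^2$ supplied by Proposition~\ref{thm:oldBnd}.
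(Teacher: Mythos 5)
Your proof is correct and is essentially the paper's own argument: the paper likewise inserts $V^\top A V$ as the intermediate comparison term (Eq.~\eqref{eq:hatSSbnd}), exploits the diagonality of $S-\hat{S}$ through the $\mathrm{diag}$ operator to discard exactly the off-diagonal mass you flag as dangerous, controls $\mathrm{diag}(\hat{V}^\top(V-\hat{V}))$ by the same half-squared-norm identity (Eq.~\eqref{eq:VThatV1}), and bounds $\mathrm{diag}(V^\top(A-P)V)$ via Hoeffding's inequality with a union bound over the $d$ diagonal entries, accounting for the $1-2\eta$ probability just as you do. The only differences are cosmetic bookkeeping (your expansion $\hat{S}M+M^\top\hat{S}+(V-\hat{V})^\top A(V-\hat{V})$ with $M=\hat{V}^\top(V-\hat{V})$ versus the paper's equivalent form using $A\hat{V}=\hat{V}\hat{S}$), so both routes yield the same $C\delta_d^{-2}d\log(n/\eta)$ bound.
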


\begin{proof}
  First, consider that we can
  write $\hat{S}=\hat{V}^\top A \hat{V}$ and $S=V^\top P V$. We therefore have
\begin{equation}
\label{eq:hatSSbnd}
	\|S- \hat{S}\|_F \leq \|V^\top P V- \mathrm{diag}(V^\top A
    V)\|_F + \|\mathrm{diag}(V^\top A V)- \hat{V}^\top A \hat{V}\|_F,
\end{equation}
where $\mathrm{diag}$ denotes the operation of making off-diagonal
elements zero. As $\mathrm{diag}(\hat{V}^\top A V) = \mathrm{diag}(V^{\top} A
\hat{V})$, for the second term in the right hand side of Eq.~\eqref{eq:hatSSbnd}, we have
\begin{equation*}
 \begin{split}
\|\mathrm{diag}(V^\top A V)- \hat{V}^\top A \hat{V}\|_F &=
\|\mathrm{diag}(V^\top A V- \hat{V}^\top A \hat{V})\|_F \\
&= \Bigl\|\mathrm{diag}((V - \hat{V})^\top A (V - \hat{V}) -
2(\hat{V} - V)^\top A \hat{V}) \Bigr\|_F  \\
&\leq \|\mathrm{diag} \bigl((V - \hat{V})^{T} A (V - \hat{V}) \bigr) \|_{F} +
2 \|\mathrm{diag}\bigl((\hat{V} - V)^\top A \hat{V}\bigr) \|_F \\
&\leq \|\hat{S}\| \ast \|V-\hat{V}\|_F^2 +  2 \| \mathrm{diag}\bigl((\hat{V} -
V)^\top \hat{V} \hat{S} \bigr) \|_{F} \\
& \leq \|\hat{S}\| \ast \|V-\hat{V}\|_F^2 + 2 \|\hat{S} \| \ast \| \mathrm{diag}\bigl((\hat{V} -
V)^\top \hat{V} \bigr) \|_{F} \\
\end{split}
\end{equation*}
Furthermore, we have
\begin{align}
\mathrm{diag}(\hat{V}^\top(V-\hat{V}))
&= \frac{1}{2}\mathrm{diag}\Bigl( \hat{V}^\top V -\hat{V}^\top \hat{V}-V^\top V+V^\top \hat{V}\Bigr) \notag \\
&= \frac{1}{2} \mathrm{diag}\Bigl( (V-\hat{V})^\top (V-\hat{V}) \Bigr) \label{eq:VThatV1}
\end{align}
and this thus implies
\begin{equation*}
  \begin{split}
 \|\mathrm{diag}(V^\top A V)- \hat{V}^\top A \hat{V}\|_F & \leq \|\hat{S}\| \ast \|V-\hat{V}\|_F^2 + \|\hat{S} \| \ast \| \mathrm{diag}\bigl((\hat{V} -
V)^\top \hat{V} \bigr) \|_{F} \\ &
\leq 2 \|\hat{S} \| \ast \|\hat{V} - V \|_{F}^{2} \\
& \leq C \delta_{d}^{-2} d \log(n/\eta).
\end{split}
\end{equation*}
with probability at least $1 - \eta$.

Now, let us denote by $V_{\cdot k}$ the $k$th column of $V$ and by $V_{k \cdot}$ the $k$th row of $V$.  We point out that matrix operations (such as transposition and inversion) are assumed to be done first, and the indexing of rows or columns last.  Thus, for example, $V_{k \cdot}^{T}$ represents  the $k$th row of $V^{T}$.
Now, for the first term in the right hand side of Eq.~\eqref{eq:hatSSbnd}, we have
\begin{equation}
\begin{split}
  \|\mathrm{diag}(V^\top(A-P) V)\|_F^{2}
  &= \sum_{k=1}^{d} \Bigl(V_ {k\cdot }^{T} (A-P) V_{\cdot k}\Bigr)^2 \\
  &= \sum_{k=1}^{d} \Bigl(\sum_{i,j} (A_{ij}-P_{ij}) V_{ik} V_{jk}\Bigr)^2  \\
  &= \sum_{k=1}^{d} \Bigl(\sum_{i < j}  2 (A_{ij} - P_{ij}) V_{ik}
  V_{jk} - \sum_{i=1}^{n} P_{ii} V_{ik}^2\Bigr)^2
\end{split}
\end{equation}
We thus have
\begin{equation}
  \label{eq:evalHoef1}
  \begin{split}
   \|\mathrm{diag}(V^\top(A-P) V)\|_F
   & \leq \sum_{k=1}^{d} \Bigl| \sum_{i < j} 2 (A_{ij} - P_{ij}) V_{ik}
  V_{jk} - \sum_{i} P_{ii} V_{ik}^{2} \Bigr| \\ &\leq \sum_{k=1}^{d} \Bigl( \Bigl| \sum_{i < j} 2 (A_{ij} - P_{ij}) V_{ik}
  V_{jk} \Bigr| + 1 \Bigr)
  \end{split}
\end{equation}
as $\sum_{i} P_{ii} V_{ik}^{2} \leq 1$ because $V$ is
orthogonal and the entries of $P_{ii}$ are in $[0,1]$.
The term $\sum_{i < j} (A_{ij} - P_{ij}) V_{ik} V_{jk}$ in Eq.\eqref{eq:evalHoef1}
is once again the sum of $\binom{n}{2}$ independent
random variables and we have, by Hoeffding's inequality, that
\begin{equation*}
  \begin{split}
  \Pr[ |\sum_{i<j} 2 (A_{ij}-P_{ij}) V_{ik} V_{jk}|\geq t ] & \leq 2\exp
 \Bigl( \frac{-2t^2}{\sum_{i<j} (2V_{ik} V_{jk})^2}\Bigr) \\
&\leq 2\exp
 \Bigl( \frac{-t^2}{\sum_{i=1}^{n} \sum_{j=1}^{n} (V_{ik}
     V_{jk})^2}\Bigr) \\
&\leq 2\exp(-t^2).
  \end{split}
\end{equation*}
Hence, the first term in Eq.~\eqref{eq:hatSSbnd}  satisfies
\begin{equation*}
\Pr[ \| \mathrm{diag}(V^\top(A-P)V)\|_F \leq d(\sqrt{\log(2d/\eta)}+1) ] \geq 1-\eta.
\end{equation*}
Putting together the bounds on the two terms in
Eq.~\eqref{eq:hatSSbnd} yields the result.
\end{proof}

In the following lemma, we prove that $V^\top\hat{V}$ is
very close to the identity matrix. This extends the bound in
Eq.~\eqref{eq:hatVV} to the $d$-dimensional setting.
\begin{lemma}
\label{lem:VThatV}
In the setting of Proposition~\ref{thm:oldBnd}, with probability
greater than $1 - 2\eta$,
\begin{equation}
  \|V^\top \hat{V}-I\|_{F} \leq \frac{Cd \log(n/\eta)}{\delta_{d}^{2} n}.
\end{equation}
\end{lemma}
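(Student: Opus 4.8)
The plan is to exploit the orthonormality of the columns of $V$ and $\hat{V}$ to beat the trivial estimate. Since $V^\top V = I$, we have $V^\top\hat{V}-I = V^\top(\hat{V}-V)$, and Proposition~\ref{thm:oldBnd} gives only $\|V^\top\hat{V}-I\|_F \le \|\hat{V}-V\|_F = O(\delta_d^{-1}\sqrt{d\log(n/\eta)/n})$, which is a square root worse than the claimed rate. To gain the extra factor I would split $V^\top\hat{V}-I$ into its symmetric and skew-symmetric parts and treat them separately: the symmetric part by the identity already used in the one-dimensional case, and the skew-symmetric (off-diagonal) part by a perturbation argument driven by the eigenvalue gaps.

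For the symmetric part, expanding $(V-\hat{V})^\top(V-\hat{V}) = 2I - V^\top\hat{V}-\hat{V}^\top V$ (using $V^\top V = \hat{V}^\top\hat{V}=I$) shows that $\tfrac12(V^\top\hat{V}+\hat{V}^\top V) - I = -\tfrac12(V-\hat{V})^\top(V-\hat{V})$, exactly as in Eq.~\eqref{eq:VThatV1}. Hence the symmetric part of $V^\top\hat{V}-I$ has Frobenius norm at most $\tfrac12\|V-\hat{V}\|_F^2$, which by Eq.~\eqref{eq:7} is $O(d\,\delta_d^{-2}\log(n/\eta)/n)$ --- already the target rate. The remaining work is entirely in the skew-symmetric part.

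For an off-diagonal entry with $k\neq l$, write $s_k,\hat{s}_l$ for the corresponding diagonal entries of $S,\hat{S}$ and set $E=A-P$. From $A\hat{V}_{\cdot l}=\hat{s}_l\hat{V}_{\cdot l}$ and $V_{\cdot k}^\top P = s_k V_{\cdot k}^\top$ I would derive the key identity $(\hat{s}_l - s_k)(V^\top\hat{V})_{kl} = V_{\cdot k}^\top E\,\hat{V}_{\cdot l}$. To bound the numerator at the rate $\log(n/\eta)$ rather than the crude $\|E\| = O(\sqrt{n\log(n/\eta)})$, I split $\hat{V}_{\cdot l}=V_{\cdot l}+(\hat{V}_{\cdot l}-V_{\cdot l})$: conditionally on $X$, the term $V_{\cdot k}^\top E V_{\cdot l}$ is a sum of independent bounded variables and is $O(\sqrt{\log(n/\eta)})$ by Hoeffding's inequality, exactly as in the proof of Theorem~\ref{thm:eigvalBnd}, while $|V_{\cdot k}^\top E(\hat{V}_{\cdot l}-V_{\cdot l})| \le \|E\|\,\|\hat{V}-V\|_F = O(\delta_d^{-1}\log(n/\eta))$ by Eqs.~\eqref{eq:7} and~\eqref{eq:opNormBnd}. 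Thus the numerator is $O(\log(n/\eta))$.

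The remaining, and hardest, step is to bound the denominator from below. By the strong law $s_j/n\to\delta_j$ almost surely, and Theorem~\ref{thm:eigvalBnd} gives $|\hat{s}_l - s_l| = O(\log(n/\eta))$; since the eigenvalues of $\Delta$ are assumed distinct, $\gamma := \min_{k\neq l}|\delta_k-\delta_l| > 0$ and hence $|\hat{s}_l - s_k| \ge \tfrac{\gamma}{2}n$ for all $k\neq l$ once $n$ is large, with high probability. This is precisely where the distinct-eigenvalue hypothesis on $\Delta$ is indispensable. Dividing, each off-diagonal entry of $V^\top\hat{V}$ is $O(\delta_d^{-1}\log(n/\eta)/n)$, so the skew-symmetric part has Frobenius norm of the same order up to the dimensional constant counting the $O(d^2)$ entries. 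Combining with the symmetric bound and using $\delta_d\le 1$ (so that $\delta_d^{-1}\le\delta_d^{-2}$) yields $\|V^\top\hat{V}-I\|_F \le Cd\,\delta_d^{-2}\log(n/\eta)/n$, with $C$ absorbing $\gamma^{-1}$ and the dimensional factors. The main obstacle is thus twofold: replacing the operator-norm bound on $V_{\cdot k}^\top E\hat{V}_{\cdot l}$ by the sharper $\log(n/\eta)$ estimate via the split above, and securing the $\Omega(n)$ eigengap.
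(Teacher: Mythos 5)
Your proposal is correct and takes essentially the same route as the paper's proof: the paper also splits $V^\top\hat{V}-I$ into a part controlled by the identity $\mathrm{diag}(\hat{V}^\top(V-\hat{V}))=\tfrac{1}{2}\mathrm{diag}((V-\hat{V})^\top(V-\hat{V}))$ and off-diagonal entries controlled by the eigengap identity $(S_{kk}-\hat{S}_{ll})(V^\top \hat{V})_{kl} = V_{\cdot k}^\top (A-P)\hat{V}_{\cdot l}$, with the numerator split exactly as you do (Hoeffding term plus $\|A-P\|\,\|\hat{V}-V\|_F$ term) and the denominator bounded below by $\Omega(n)$ using the distinct eigenvalues of $\Delta$. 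Your symmetric/skew-symmetric packaging (versus the paper's diagonal/off-diagonal split) and your use of Theorem~\ref{thm:eigvalBnd} in place of Weyl's inequality for $|\hat{s}_l - s_l|$ are cosmetic variations; if anything, your explicit minimum-gap constant $\gamma$ is a more careful treatment of the denominator than the paper's lower bound of $\delta_d n$ on $|S_{kk}-S_{ll}|$.
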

\begin{proof}
  The diagonal entries of $V^\top \hat{V} - I$ can be bounded by
  Eq.~(\ref{eq:VThatV1}) to yield
\begin{equation*}
 \|\mathrm{diag}((V-\hat{V})\hat{V})\|_F \leq
 \frac{1}{2}\|V-\hat{V}\|_F^2\leq \frac{Cd \log(n/\eta)}{\delta_{d}^{2} n}
\end{equation*}
with probability at least $1 - \eta$.  To bound the off-diagonal
terms, we adapt a proof from \citet{sarkar13:_role_spect} to
this somewhat different case.  First, $V^\top (A-P) \hat{V} = V^\top
\hat{V}\hat{S} - SV^\top \hat{V}$. The $ij$th entry of $V^{\top} (A -
P) \hat{V}$ can be written as
\begin{equation}
 \label{eq:VThatV2}
  V_{i\cdot }^\top(A-P) \hat{V}_{\cdot j} = (S_{ii}-\hat{S}_{jj})
  V_{i \cdot }^\top \hat{V}_{\cdot j}.
\end{equation}
Because the eigenvalues are distinct and $\|A-P\| \leq
2\sqrt{n \log(n/\eta)}$ with probability at least $1 - \eta$,
we know for $i\neq j$ that
\begin{equation*}
   |S_{ii}-\hat{S}_{jj}| \geq |S_{ii}-S_{jj}|-\|A-P\| \geq
   \delta_{d} n - 2\sqrt{n \log(n/\eta)} \geq \delta_{d} n/2
\end{equation*}
for sufficiently large $n$ with probability at least $1 - \eta$.
We also have that
\begin{equation*}
   V_{i \cdot}^\top(A-P) \hat{V}_{\cdot j} =
V_{i \cdot }^\top(A-P)V_{\cdot j} + V_{ i\cdot }^\top(A-P)(V_{\cdot j} -\hat{V}_{\cdot j})
\end{equation*}
A similar argument to the one following Eq.~\eqref{eq:evalHoef1} shows
that $|V_{i \cdot }^\top(A-P)V_{\cdot j}|\leq C \sqrt{\log (d/\eta)}$
and an application of the the Cauchy-Schwarz inequality and
Eq.~\eqref{eq:7} yields
\begin{equation*}
   |V_{i \cdot }^\top(A-P)(V_{\cdot j} -\hat{V}_{\cdot j})| \leq (C
   \sqrt{n \log(n/\eta)})(\delta_{d}^{-1} n^{-1/2}\sqrt{\log(n/\eta)})
\end{equation*}
Dividing through by $S_{ii}-\hat{S}_{jj}$ in Eq.~\ref{eq:VThatV2} gives
\begin{equation*}
  \begin{split}
 V_{i \cdot}^\top \hat{V}_{\cdot j} = \frac{V_{i \cdot}^\top(A-P) \hat{V}_{\cdot j}}{(S_{ii}-\hat{S}_{jj})}
&\leq \frac{C (\sqrt{\log (d/\eta)} + \delta_{d}^{-1}\log(n/\eta))} {\delta_{d} n/2}
\leq \frac{C\log(n/\eta)}{\delta_{d}^2 n}
  \end{split}
\end{equation*}
Eq.~\eqref{eq:evalHoef1} follows from this and the bound for the diagonal terms.\end{proof}

We now establish the central limit theorem for the scaled differences
between the estimated and true latent positions in the
finite-dimensional random dot product graph setting.
\begin{theorem}
  \label{thm:clt_multid}
  Let $(X,A)\sim \mathrm{RDPG}(F)$ be a $d$-dimensional random dot
  product graph, i.e.,
  $F$ is a distribution for points in $\mathbb{R}^{d}$, and let
  $\hat{X}$ be our estimate for $X$.  Let $\Phi(z, \Sigma)$ denote the
  cumulative distribution function for the multivariate normal, with
  mean zero and covariance matrix $\Sigma$, evaluated at $z$. Then
  there exists a sequence of orthogonal matrices $W_n$ converging to the identity almost surely such that for
  each component $i$ and any $z \in \mathbb{R}^{d}$,
  \begin{equation}
    \label{eq:4}
    \Pr\Bigl\{n^{1/2}( W_n \hat{X}_i - X_i ) \leq z\Bigr\}
\rightarrow  \int \Phi(z_i, \Sigma(x_i)) dF(x_i)
  \end{equation}
  where $\Sigma(x) =\Delta^{-1}\Ex[X_j X_j^\top(x^\top X_j -(x^\top
  X_j)^2)]\Delta^{-1}$ and $\Delta=\Ex[X_1 X_{1}^{T}]$ is the second
  moment matrix. That is, the sequence of random variables
  $n^{1/2}( W_n \hat{X}_i -X_i)$ converges in distribution to
  a mixture of multivariate normals.  We denote this mixture by
  $\mathcal{N}(0, \Sigma(X_i))$.
\end{theorem}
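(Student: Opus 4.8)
The plan is to mirror the structure of the one-dimensional argument (Propositions~\ref{prop:conditional_CLT} and \ref{prop:bounds_on_Y-X} together with Lemma~\ref{lem:Lyz}), carrying out one step of the power method and replacing scalars by the appropriate $d\times d$ matrices. Concretely, I would set $Y = A V \hat{S}^{-1/2} \in \Re^{n\times d}$, the result of applying $A$ to the \emph{true} eigenvector matrix $V$ and rescaling by the estimated eigenvalues. Since $A\hat{V} = \hat{V}\hat{S}$, one has $\hat{X} = \hat{V}\hat{S}^{1/2} = A\hat{V}\hat{S}^{-1/2}$, so that $Y - \hat{X} = A(V - \hat{V})\hat{S}^{-1/2}$; the whole argument then reduces to (i) a conditional CLT for the rows of $Y$, (ii) showing the rows of $Y - \hat{X}$ are negligible after scaling by $n^{1/2}$, and (iii) absorbing the rotation $W_n$.

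For step (i), fix $X_i = x_i$ and write the $i$th row of $Y$ as $Y_i = \hat{S}^{-1/2}\sum_{j\neq i} A_{ij} V_{j\cdot}^\top$ (using $A_{ii}=0$). Using $PV = VS$ one gets $\sum_j P_{ij} V_{j\cdot}^\top = S V_{i\cdot}^\top$, and the UPCA identity $V_{j\cdot}^\top = S^{-1/2} W_n^\top X_j$, so that, centering at $c_i := \hat{S}^{-1/2} S^{1/2} W_n^\top x_i$,
\begin{equation*}
  n^{1/2}(Y_i - c_i) = \bigl(n\hat{S}^{-1/2}S^{-1/2}W_n^\top\bigr)\,\frac{1}{\sqrt{n}}\sum_{j\neq i}(A_{ij}-x_i^\top X_j)X_j \;-\; n^{1/2}\hat{S}^{-1/2} P_{ii} V_{i\cdot}^\top.
\end{equation*}
The summands $(A_{ij}-x_i^\top X_j)X_j$ are i.i.d., mean zero, and bounded (as $\|X_j\|\le 1$ on $\mathcal{X}$), with covariance $\Ex[X_jX_j^\top(x_i^\top X_j -(x_i^\top X_j)^2)]$, so the multivariate Lindeberg-Feller theorem applies. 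The prefactor $n\hat{S}^{-1/2}S^{-1/2}W_n^\top$ converges to $\Delta^{-1}$ (since $S/n,\hat{S}/n \to \Delta$ and $W_n\to I$), and the final term is $O(n^{-1/2})$; Slutsky's theorem then yields $n^{1/2}(Y_i - c_i) \to \mathcal{N}(0, \Sigma(x_i))$ with $\Sigma(x_i)=\Delta^{-1}\Ex[X_jX_j^\top(x_i^\top X_j-(x_i^\top X_j)^2)]\Delta^{-1}$.

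For step (ii), I would bound $\|Y - \hat{X}\|_F = \|A(V-\hat{V})\hat{S}^{-1/2}\|_F$ by splitting $A = \hat{V}\hat{S}\hat{V}^\top + R$: the first piece is controlled by $\|\hat{V}^\top V - I\|_F$ via Lemma~\ref{lem:VThatV}, and the second by $\|A-P\|$ and $\|V-\hat{V}\|_F$ via Proposition~\ref{thm:oldBnd}, giving $\|Y-\hat{X}\|_F = O(\log(n/\eta)/\sqrt{n})$, the analogue of Proposition~\ref{prop:bounds_on_Y-X}. Exchangeability of the rows then gives $\Ex\|\hat{X}_i - Y_i\|^2 = n^{-1}\Ex\|\hat{X}-Y\|_F^2 = O(\log^2 n/n^2)$, so Markov's inequality forces $n^{1/2}(\hat{X}_i - Y_i)\to 0$ in probability. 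To handle the centering I would show $n^{1/2}(c_i - \tilde{X}_i) = n^{1/2}(\hat{S}^{-1/2}S^{1/2}-I)W_n^\top x_i \to 0$, which follows from $\|S-\hat{S}\|_F = O(\log(n/\eta))$ (Theorem~\ref{thm:eigvalBnd}) together with $\hat{S}^{-1/2}=O(n^{-1/2})$; this is precisely the multivariate version of Lemma~\ref{lem:Lyz}.

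Finally, step (iii): combining the three estimates gives $n^{1/2}(\hat{X}_i - \tilde{X}_i)\to \mathcal{N}(0,\Sigma(x_i))$ conditionally on $X_i = x_i$, and since $\tilde{X}_i = W_n^\top X_i$ we have $W_n(\hat{X}_i - \tilde{X}_i) = W_n\hat{X}_i - X_i$, so $W_n\to I$ lets Slutsky pass the limit through. Integrating over $x_i\sim F$ and invoking dominated convergence yields the mixture on the right-hand side of \eqref{eq:4}. The one genuinely new obstacle, absent in the scalar case, is the orthogonal nonidentifiability: I must produce the sequence $W_n$ and prove $W_n\to I$ almost surely. I would do this by identifying $W_n$ as the (sign-fixed) eigenvector matrix of $X^\top X$, so that $XW_n = VS^{1/2}=\tilde{X}$, and arguing that $n^{-1}X^\top X \to \Delta$ by the strong law; since $\Delta$ has distinct diagonal entries, continuity of the eigenvectors then forces $W_n \to I$. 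Threading this $W_n$ consistently through the conditional covariance computation and the final Slutsky step—so that it converges away and leaves $\Sigma(x_i)$ free of any rotation—is the most delicate part of the argument.
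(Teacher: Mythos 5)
Your proposal is correct and follows essentially the same route as the paper's proof: one power-method step $AV$ with spectral rescaling, a conditional multivariate CLT for its rows with the prefactor converging to $\Delta^{-1}$, negligibility of $A(V-\hat{V})\hat{S}^{-1/2}$ via Lemma~\ref{lem:VThatV}, Proposition~\ref{thm:oldBnd}, exchangeability and Markov, the eigenvalue bound of Theorem~\ref{thm:eigvalBnd} to handle the $S$ versus $\hat{S}$ discrepancy, and $W_n\to I$ almost surely from $X^\top X/n\to\Delta$ with distinct eigenvalues. The only difference is cosmetic: you scale by $\hat{S}^{-1/2}$ from the start and absorb the eigenvalue discrepancy into the centering term $c_i$, whereas the paper scales by $S^{-1/2}$ first (so the centering is exactly $\tilde{X}_i$) and treats the switch to $\hat{S}^{-1/2}$ as a separate negligible difference $\tilde{Y}_i-Y_i$.
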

\begin{proof}
  The proof of the finite-dimensional central limit theorem proceeds
  in an almost identical manner to that of the one-dimensional central limit
  theorem as stated in Theorem~\ref{thm:clt_oned}. We recall
  Definition~\ref{def:upca} on the UPCA $\tilde{X}$ of $X$, i.e.,
  $\tilde{X} W_n = X$ some orthogonal matrix $W_n$. That is, for a
  given $n$, there exists an orthogonal matrix $W_n$ such that $W_n
  \tilde{X}_i = X_i$ for all $i=1,\dotsc,n$ where we recall that $X =
  [X_1, \dots, X_n]^{T}$ and $\tilde{X} = [\tilde{X}_1, \dots,
  \tilde{X}_n]^{T}$. Hence we shall prove
  Theorem~\ref{thm:clt_multid} by showing that the right hand side of
  Eq.~\eqref{eq:4} holds for the scaled difference $n^{1/2}(\hat{X}_i
  - \tilde{X}_i)$.

Let $AVS^{-1/2}=[Y_1, \cdots, Y_n]^{\top}$ where $Y_i \in \mathbb{R}^d$. We first
  show that $n^{1/2}(\tilde{X}_i - Y_i)$ is multivariate normal in the
  limit. We have
  \begin{equation*}
    \begin{split}
     n^{1/2}(\tilde{X}_i - Y_i) &=
     n^{1/2}((P\tilde{X}S^{-1})_{\cdot i}^{T} - (A\tilde{X}S^{-1})_{\cdot i}^{T})
     \\ &= n^{1/2}((P X W_n^{T} S^{-1} - A X W_n^{T} S^{-1})_{\cdot i}^{T})
    \\ &= n^{1/2} S^{-1} W_n ((PX)_{\cdot i}^{T} - (AX)_{\cdot i}^{T})  \\
    &= n S^{-1} W_n \Bigl(\frac{1}{n^{1/2}} \sum_{j=1}^{n} (P_{ij} -
    A_{ij}) X_{j}\Bigr) \\
    &= n S^{-1} W_n \Bigl( \frac{1}{n^{1/2}} \sum_{j\not=i} (A_{ij} -
        X_{i}^{T} X_j) X_j - \frac{X_i^{T} X_i}{n^{1/2}} X_i \Bigr)
    \end{split}
  \end{equation*}
Conditional on $X_i=x_i$,   the scaled sum
  \begin{equation*}
    \frac{1}{n^{1/2}} \sum_{j\not=i} (A_{ij} -
        X_{i}^{T} X_j) X_j
  \end{equation*}
  is once again a sum of i.i.d random
  variables, each with mean $0$ and covariance matrix
  $\tilde{\Sigma}(x_i) = \Ex[X_j X_j^\top(x_i^\top X_j -(x_i^\top
  X_j)^2)]$. Therefore, by the classical multivariate central limit theorem, we have
  \begin{equation*}
    \Bigl( \frac{1}{n^{1/2}} \sum_{j\not=i} (A_{ij} -
        x_{i}^{T} X_j) X_j - \frac{x_i^{T} x_i}{n^{1/2}} x_i \Bigr)
        \overset{\mathcal{L}}{\rightarrow} \mathcal{N}(0, \tilde{\Sigma}(x_i))
  \end{equation*}
 The strong law of large numbers ensures that $nS^{-1} \rightarrow \Delta^{-1} = (\mathbb{E}[X_1
  X_1^{T}])^{-1}$ almost surely. In addition,
  \begin{gather*}
 \frac{1}{n} X^{T} X = (\tilde{X} W_n^{T})^{T} (\tilde{X} W_n^{T}) W_n
 \tilde{X}^{T} \tilde{X} W_n^{T} = W_n (S/n) W_n^{T} \rightarrow W_n \Delta W_n^{T}
  \end{gather*}
  almost surely. We thus have $W_n \rightarrow I$ almost
  surely. Hence, by Slutsky's theorem in
  the multivariate setting, we have, conditional on $X_i = x_i$, that
  \begin{equation*}
    n^{1/2}(\tilde{X}_i - Y_i) \overset{\mathcal{L}}{\rightarrow}
    \mathcal{N}(0, \Delta^{-1} \tilde{\Sigma}(x_i) \Delta^{-1}) =
    \mathcal{N}(0, \Sigma(x_i))
  \end{equation*}

  Now let $\tilde{Y}_i = \hat{S}^{-1/2} (AV)_{\cdot i}^{T}$
  (contrast this with $Y_i = S^{-1/2} (AV)_{\cdot i}^{T}$. We then
  have
  \begin{equation*}
    \begin{split}
    n^{1/2} \|\tilde{Y}_i - Y_i\| &= \|n^{1/2} (S^{-1/2} \hat{S}^{-1/2} - S^{-1})
    (A\tilde{X})_{\cdot i}^{T} \| \\ &= \|n^{1/2} S^{-1/2} (\hat{S}^{-1/2} - S^{-1/2})
    W_n (A X)_{\cdot i}^{T} \| \\ &\leq n^{1/2} \|S^{-1/2} \| \ast \|A\| \ast
    \| \hat{S}^{-1/2} - S^{-1/2} \|
    \end{split}
  \end{equation*}
  We now use the notation
  $Z = \tilde{O}_{\Pr}(f(n))$ to denote that $Z$ is, with high
  probability, bounded by $f(n)$ times some multiplicative factor that
  does not depend on $n$.   Since $\|\hat{S} - S\| =
  \tilde{O}_{\Pr}(\log{n})$, i.e., $\|\hat{S} - S\|$ is bounded, with
  high probability, by a logarithmic function of $n$
  times some factor depending only
  on $\delta_{d}$ and $d$, we have $\|\hat{S}^{-1/2}
  - S^{-1/2}\| = \tilde{O}_{\Pr}(n^{-3/2} \log{n})$. We thus have
  \begin{equation*}
    \sqrt{n} \|\tilde{Y}_i - Y_i\| = \sqrt{n} \|S^{-1/2}\| \ast \|A\| \ast
    \| \hat{S} - S \| = \tilde{O}_{\Pr}(n^{-1/2} \log{n})
  \end{equation*}
  That is to say, $\sqrt{n} (\tilde{Y}_i - Y_i)$ converges to $0$ in
  probability.

  Finally, we derive a bound for $\sqrt{n}(\tilde{Y}_i -
  \hat{X}_i)$. By Markov's inequality
  \begin{equation*}
    \mathbb{P}[\sqrt{n}\|\tilde{Y}_i - \hat{X}_i\| \geq \epsilon] \leq
    \frac{\mathbb{E}[n\|\tilde{Y}_i - \hat{X}_i\|^2]}{\epsilon^2}
    = \frac{\mathbb{E}[\|AV\hat{S}^{-1/2} -
      A\hat{V}\hat{S}^{-1/2}\|_{F}^{2}]}{\epsilon^2}.
  \end{equation*}

  Let $E = A - \hat{V} \hat{S} \hat{V}^{T}$. We then have
  \begin{equation*}
    \begin{split}
    \|A \hat{V} \hat{S}^{-1/2} - A V \hat{S}^{-1/2} \|_{F} &=  \| (\hat{V}
    \hat{S} \hat{V}^{T} + E) (\hat{V} - V) \hat{S}^{-1/2} \|_{F} \\ &\leq \|
    \hat{V} \hat{S} \hat{V}^{T} (\hat{V} - V) \hat{S}^{-1/2} \|_{F} + \|E
    (\hat{V} - V) \hat{S}^{-1/2} \|_{F} \\ &
    \leq \|\hat{S}\| \|\hat{V}^{T} V - I\|_{F} \|\hat{S}^{-1/2} \| +
    \|E\| \|\hat{V} - V\|_{F} \|\hat{S}^{-1/2}\|
  \end{split}
  \end{equation*}
  By Lemma~\ref{lem:VThatV} and bounds for the spectral norm of
  $S$ and $E$, we have, with probability at least $1 - 2\eta$, that
  \begin{equation*}
    \|A \hat{V} \hat{S}^{-1/2} - A V \hat{S}^{-1/2} \|_{F} \leq C d
    \delta_{d}^{-5/2} n^{-1/2} \log{(n/\eta)}
  \end{equation*}
  On the other hand, $\|A \hat{V} \hat{S}^{-1/2} - A V \hat{S}^{-1/2}
  \|_{F}$ is at most of order $\sqrt{n}$. As $\eta$ is arbitrary, we thus have,
  \begin{equation*}
\frac{\mathbb{E}[\|AV\hat{S}^{-1/2} -
      A\hat{V}\hat{S}^{-1/2}\|_{F}^{2}]}{\epsilon^2} \leq \frac{C
      \log^{2}{n}}{\epsilon^{2} \delta_{d}^{5} n}
  \end{equation*}
  which converges to $0$ for any fixed $\epsilon > 0$ as $n
  \rightarrow \infty$. Hence, $\sqrt{n}(\tilde{Y}_i - \hat{X}_i)$
  also converges to $0$ in probability. The above reasoning can now be
  combined to show that, conditional on $X_i = x_i$,
  \begin{equation}
    \label{eq:5}
    \begin{split}
    \sqrt{n}(\hat{X}_i - \tilde{X}_i) &= \sqrt{n}(Y_i -
    \tilde{X}_i) + \sqrt{n}(\tilde{Y}_i - Y_i)
     + \sqrt{n}(\hat{X}_i - \tilde{Y}_i) \\ &\overset{\mathcal{L}}\rightarrow
    \mathcal{N}(0, \Sigma(x_i)) + o_{\Pr}(1) + o_{\Pr}(1) \\ &\overset{\mathcal{L}}\rightarrow
    \mathcal{N}(0, \Sigma(x_i)).
    \end{split}
  \end{equation}
  Finally, by an application of Slutsky's theorem to Eq.~\eqref{eq:5},
  we have, conditional on $X_i = x_i$, that
  \begin{equation*}
    \sqrt{n}(W_n \hat{X}_i
    - X_i) = \sqrt{n}(W_n \hat{X}_i - W_n \tilde{X}_i) = W_n
    \sqrt{n}(\hat{X}_i - \tilde{X}_i) \rightarrow \mathcal{N}(0, \Sigma(x_i)).
  \end{equation*}
  Eq.~\eqref{eq:4} then follows by integrating the above display over
  all the possible realizations of $X_i$ and applying the dominated
  convergence theorem.
\end{proof}

We now state the finite-dimensional analogues for the corollaries of
Section~\ref{sec:cor}. Their proofs follow directly from
Theorem~\ref{thm:clt_multid} in similar manners to their
one-dimensional counterparts.
\begin{corollary}
  \label{cor:sbmCLT_multid}
  In the setting of Theorem~\ref{thm:clt_multid}, let
  $\mathcal{X}=\mathrm{supp}(F)\subset[0,1]^{d}$ be the support of the
  distribution of the $X_i$ and suppose that
  $|\mathcal{X}|=m<\infty$. Suppose for each $x\in \mathcal{X}$, we
  have that $\Pr[X_i=x]=\pi_x>0$. Then for all $x\in\mathcal{X}$, if
  we condition on $X_i=x$, we obtain
\begin{equation}
 \label{eq:condConv2_multid}
  \Pr\Bigl\{n^{1/2}( \hat{X}_i - x)\leq z \mid X_i=x \Bigr\}
  {\longrightarrow}
\Phi(z, \Sigma(x))
\end{equation}
where $\Sigma(x)$ is as in Theorem~\ref{thm:clt_multid}.
\end{corollary}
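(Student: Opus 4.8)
The plan is to mirror the proof of the one-dimensional stochastic blockmodel result, Corollary~\ref{cor:sbmCLT_1d}, replacing the scalar quantities by their $d$-dimensional analogues from the proof of Theorem~\ref{thm:clt_multid}. Recall that in that proof one writes, conditional on $X_i=x$,
\[
\sqrt{n}(\hat{X}_i - \tilde{X}_i) = \sqrt{n}(Y_i - \tilde{X}_i) + \sqrt{n}(\tilde{Y}_i - Y_i) + \sqrt{n}(\hat{X}_i - \tilde{Y}_i),
\]
where the first summand converges in distribution to $\mathcal{N}(0,\Sigma(x))$ by the multivariate central limit theorem, and the second converges to zero in probability through the uniform high-probability estimate $\|\hat{S}-S\| = \tilde{O}_{\Pr}(\log n)$. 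Both of these facts are genuinely conditional statements given $X_i=x$, so by Slutsky's theorem it suffices to upgrade the third summand to a \emph{conditional} negligibility statement: namely, to show that for every $\epsilon>0$ and every $x\in\mathcal{X}$,
\[
q_n(x,\epsilon) := \Pr\bigl(\sqrt{n}\|\hat{X}_i - \tilde{Y}_i\| > \epsilon \mid X_i=x\bigr) \longrightarrow 0 .
\]

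First I would recall that the proof of Theorem~\ref{thm:clt_multid} already furnishes the corresponding \emph{unconditional} bound. Indeed, by Markov's inequality, the exchangeability of $\{\hat{X}_i - \tilde{Y}_i\}_{i=1}^n$, and the Frobenius-norm estimate obtained from Lemma~\ref{lem:VThatV} together with the spectral-norm bounds of Proposition~\ref{thm:oldBnd}, one has
\[
\Pr\bigl(\sqrt{n}\|\hat{X}_i - \tilde{Y}_i\| > \epsilon\bigr) \leq \frac{\Ex[\|A V \hat{S}^{-1/2} - A\hat{V}\hat{S}^{-1/2}\|_F^2]}{\epsilon^2} \leq \frac{C\log^2 n}{\delta_d^5\, n\, \epsilon^2}.
\]
The finite-support hypothesis then transfers this averaged bound to each atom. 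Writing $\pi_{\min} = \min_{x\in\mathcal{X}} \pi_x > 0$ and conditioning on the value of $X_i$, I would observe that
\[
\Pr\bigl(\sqrt{n}\|\hat{X}_i - \tilde{Y}_i\| > \epsilon\bigr) = \sum_{x'\in\mathcal{X}} \pi_{x'}\, q_n(x',\epsilon) \geq \pi_{\min}\, q_n(x,\epsilon),
\]
so that $q_n(x,\epsilon) \leq C\log^2 n/(\pi_{\min}\,\delta_d^5\, n\, \epsilon^2)\to 0$, which is exactly the required conditional convergence. An application of Slutsky's theorem to the three-term decomposition then gives $\sqrt{n}(\hat{X}_i - \tilde{X}_i) \overset{\mathcal{L}}{\to} \mathcal{N}(0,\Sigma(x))$ conditional on $X_i=x$, and the statement \eqref{eq:condConv2_multid} follows by multiplying through by $W_n$ and using the clean identity $\sqrt{n}(W_n\hat{X}_i - X_i) = W_n\,\sqrt{n}(\hat{X}_i - \tilde{X}_i)$ with $W_n\tilde{X}_i = X_i$.

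I expect the main obstacle to be precisely the passage from the averaged (unconditional) control of the negligible remainder $\sqrt{n}\|\hat{X}_i - \tilde{Y}_i\|$ to conditional control given the single atom $\{X_i=x\}$; the finiteness of $\mathcal{X}$ and the strict positivity of $\pi_{\min}$ are exactly what make this transfer possible, just as in the one-dimensional argument. A secondary point requiring care is the rotational factor $W_n$: since Theorem~\ref{thm:clt_multid} is stated only modulo the orthogonal $W_n$ satisfying $W_n\tilde{X}_i = X_i$, one should route the conclusion through the identity above rather than attempt to control $\sqrt{n}(W_n-I)$ directly, the latter being delicate under the $\sqrt{n}$ scaling even though $W_n\to I$ almost surely. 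Everything else proceeds \emph{mutatis mutandis} from the one-dimensional case.
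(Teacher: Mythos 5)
Your proposal is correct and matches the paper's intended argument: the paper gives no separate proof for Corollary~\ref{cor:sbmCLT_multid}, stating only that it ``follows directly from Theorem~\ref{thm:clt_multid} in similar manners to their one-dimensional counterparts,'' and your proof is exactly that transfer --- the three-term decomposition from the theorem's proof, the Markov/exchangeability bound on the remainder, and the $\pi_{\min}$ atom argument from Corollary~\ref{cor:sbmCLT_1d}, with the $W_n$ factor handled via $W_n\tilde{X}_i = X_i$ and $W_n \to I$ as in the theorem. The only cosmetic gap is the claim that the middle term $\sqrt{n}(\tilde{Y}_i - Y_i)$ is ``genuinely conditional''; strictly it rests on unconditional high-probability bounds, but the same $\pi_{\min}$ transfer you already use disposes of it immediately.
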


\begin{corollary}\label{cor:cltCondSet_multid}
  In the setting of Theorem~\ref{thm:clt_multid}, suppose that
  $\mathcal{B}\subset [0,1]^{d}$ is such that
  $\Pr[X_i\in\mathcal{B}]>0$. If we condition on the event
  $\{X_i\in\mathcal{B}\}$, we obtain
\begin{equation}
\label{eq:condConv_multid}
\Pr\Bigl\{n^{1/2}( \hat{X}_i -X_i)\leq z \mid X_i \in
  \mathcal{B} \Bigr\}
 {\longrightarrow} \frac{1}{\Pr\Bigl(X_i \in \mathcal{B}\Bigr)}
\int_\mathcal{B} \Phi(z, \Sigma(x))dF(x)
\end{equation}
where $\Sigma(x)$ is as in Theorem~\ref{thm:clt_multid}.
\end{corollary}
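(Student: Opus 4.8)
The plan is to mirror the one-dimensional argument for Corollary~\ref{cor:cltCondSet}, reusing the ingredients already assembled in the proof of Theorem~\ref{thm:clt_multid} rather than reproving them. Recall that that proof establishes, conditional on $X_i = x$, the decomposition
\[
\sqrt{n}(\hat{X}_i - \tilde{X}_i) = \sqrt{n}(Y_i - \tilde{X}_i) + \sqrt{n}(\tilde{Y}_i - Y_i) + \sqrt{n}(\hat{X}_i - \tilde{Y}_i),
\]
in which the first summand converges in law to $\mathcal{N}(0, \Sigma(x))$ while the remaining two are $o_{\Pr}(1)$; combined with $W_n \to I$ almost surely and Slutsky's theorem, this yields the conditional (on $X_i = x$) limit appearing on the right-hand side of Eq.~\eqref{eq:4}. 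The task here is only to show that conditioning on the positive-measure event $\{X_i \in \mathcal{B}\}$, rather than on a single point, leaves this picture intact and produces the restricted mixture in Eq.~\eqref{eq:condConv_multid}.

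First I would treat the two negligible summands. The proof of Theorem~\ref{thm:clt_multid} already bounds them unconditionally: the exchangeability of the rows together with Markov's inequality and Lemma~\ref{lem:VThatV} gives
\[
\Pr\bigl(\sqrt{n}\|\hat{X}_i - \tilde{Y}_i\| > \epsilon\bigr) \leq \frac{\Ex[\|AV\hat{S}^{-1/2} - A\hat{V}\hat{S}^{-1/2}\|_F^2]}{\epsilon^2} \leq \frac{C\log^2 n}{\epsilon^2 \delta_d^5\, n},
\]
while the $\tilde{O}_{\Pr}$ control of $\|\hat{S} - S\|$ gives $\Pr(\sqrt{n}\|\tilde{Y}_i - Y_i\| > \epsilon) \to 0$ as well. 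Writing $\beta = \Pr(X_i \in \mathcal{B}) > 0$, I would then bound the conditional probability by the unconditional one,
\[
\Pr\bigl(\sqrt{n}\|\hat{X}_i - \tilde{Y}_i\| > \epsilon \mid X_i \in \mathcal{B}\bigr) = \frac{\Pr\bigl(\sqrt{n}\|\hat{X}_i - \tilde{Y}_i\| > \epsilon,\ X_i \in \mathcal{B}\bigr)}{\beta} \leq \frac{1}{\beta}\,\Pr\bigl(\sqrt{n}\|\hat{X}_i - \tilde{Y}_i\| > \epsilon\bigr),
\]
and likewise for $\tilde{Y}_i - Y_i$. Since $\beta$ is a fixed positive constant, both conditional probabilities still tend to zero, so both remainder terms remain $o_{\Pr}(1)$ after conditioning on $\{X_i \in \mathcal{B}\}$.

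It then remains to integrate the conditional-on-a-point limit against the law of $X_i$ restricted to $\mathcal{B}$. For fixed $z$, I would condition first on $X_i = x$, where the distributionally convergent summand contributes $\Phi(z, \Sigma(x))$ in the limit, integrate over $x \in \mathcal{B}$ against $dF(x)/\beta$, and invoke the dominated convergence theorem (with the constant dominating function $1$, since $\Phi \leq 1$) to obtain $\beta^{-1}\int_{\mathcal{B}}\Phi(z, \Sigma(x))\,dF(x)$. A final application of Slutsky's theorem absorbs the two $o_{\Pr}(1)$ remainder terms (and the factor $W_n \to I$, exactly as in Theorem~\ref{thm:clt_multid}), yielding Eq.~\eqref{eq:condConv_multid}.

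I expect the only genuine subtlety, and it is a mild one, to be the second step: confirming that conditioning on a set rather than a point does not destroy the negligibility of the error terms. This works precisely because the bounds supplied by Theorem~\ref{thm:clt_multid} and Lemma~\ref{lem:VThatV} are unconditional and of order $\log^2 n / n$ (respectively $o(1)$), so dividing by the fixed constant $\beta = \Pr(X_i \in \mathcal{B}) > 0$ preserves convergence to zero. Were $\mathcal{B}$ permitted to have vanishing probability the argument would break, which is exactly why the positivity hypothesis $\Pr(X_i \in \mathcal{B}) > 0$ is essential.
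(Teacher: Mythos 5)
Your proposal is correct and takes essentially the same route as the paper, which proves this corollary only by deferral: it cites Theorem~\ref{thm:clt_multid} together with the one-dimensional counterpart (Corollary~\ref{cor:cltCondSet}, itself resting on the argument for Corollary~\ref{cor:sbmCLT_1d}), where the residual terms' unconditional $o_{\Pr}(1)$ bounds are divided by the positive probability of the conditioning event and the conditional-on-a-point limit is then integrated over $\mathcal{B}$ with dominated convergence and Slutsky's theorem. Your write-up is exactly this scheme, correctly instantiated with the three-term decomposition, the Markov/exchangeability bound, and the $\tilde{O}_{\Pr}$ control from the proof of Theorem~\ref{thm:clt_multid}.
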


\begin{corollary}
\label{cor:cltK_multid}
  Suppose $X$ and $\hat{X}$ are as in Theorem~\ref{thm:clt_multid}. Let
  $K\in\mathbb{N}$ be any fixed positive integer; let $i_1,
  \dotsc,i_K\in \mathbb{N}$ be any fixed set of indices and let $z_1,
  \cdots, z_K \in \mathbb{R}^{d}$ be fixed.  Then
\begin{equation}
\lim\limits_{n \rightarrow \infty} \Pr\Bigl[ \bigcap_{k=1}^{K} \{\sqrt{n}(\hat{X}_{i_k}
-X_{i_k}) \leq z_k\}\Bigr]=
\prod_{k=1}^{K} \int_{\mathcal{X}} \Phi(z_k, \Sigma(x_k)) dF(x_k)
\end{equation}
where $\Phi(\cdot, \Sigma)$ denotes the cumulative distribution
function (cdf) for a $d$-variate normal with mean zero and covariance
matrix $\Sigma$. Again the covariance matrices $\Sigma(x)$ are as in
Theorem~\ref{thm:clt_multid}.
\end{corollary}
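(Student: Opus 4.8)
The plan is to mirror the one-dimensional Corollary~\ref{cor:cltK} and reduce the joint statement to a family of one-dimensional statements via the Cram\'er--Wold device, carrying out the whole computation conditionally on the latent positions $X_{i_1}=x_{i_1},\dotsc,X_{i_K}=x_{i_K}$ and integrating out at the end. Fixing vectors $a_1,\dotsc,a_K\in\Re^d$, it suffices to prove that $\sum_{k=1}^K a_k^\top \sqrt{n}\,(W_n\hat{X}_{i_k}-X_{i_k})$ converges in distribution to a centered Gaussian with variance $\sum_{k=1}^K a_k^\top \Sigma(x_{i_k}) a_k$, since this is exactly the variance of $\sum_k a_k^\top Z_k$ for independent $Z_k\sim\mathcal{N}(0,\Sigma(x_{i_k}))$; matching all such linear functionals forces the conditional joint limit to be the product measure $\bigotimes_k \mathcal{N}(0,\Sigma(x_{i_k}))$. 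Here $W_n\to I$ almost surely, so the rotation is harmless and shared across all indices.

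First I would invoke the decomposition established in the proof of Theorem~\ref{thm:clt_multid}: for each fixed index $i_k$, conditionally on $X_{i_k}=x_{i_k}$, we have $\sqrt{n}\,(W_n\hat{X}_{i_k}-X_{i_k}) = nS^{-1}W_n\cdot\frac{1}{\sqrt{n}}\sum_{j\neq i_k}\xi_{i_k j}+o_{\mathbb{P}}(1)$, where $\xi_{i_k j}=(A_{i_k j}-x_{i_k}^\top X_j)X_j\in\Re^d$ and the $o_{\mathbb{P}}(1)$ term collects the $\tilde{X}-Y$, $Y-\tilde{Y}$, and $\tilde{Y}-\hat{X}$ contributions, each controlled by Proposition~\ref{thm:oldBnd}, Theorem~\ref{thm:eigvalBnd}, and Lemma~\ref{lem:VThatV} together with a Markov-inequality argument. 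Since $W_n\to I$ and $nS^{-1}\to\Delta^{-1}$ almost surely, Slutsky's theorem lets me replace the prefactor by $\Delta^{-1}$, so each block reduces to $\Delta^{-1}\cdot\frac{1}{\sqrt{n}}\sum_{j\neq i_k}\xi_{i_k j}$.

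The heart of the argument is the covariance bookkeeping showing that distinct blocks decouple, and I expect this to be the only genuinely new ingredient beyond Theorem~\ref{thm:clt_multid}. Applying the classical multivariate central limit theorem to the stacked leading terms, the $(k,k')$ block of the limiting covariance is $\lim_{n}\Delta^{-1}\bigl(\tfrac{1}{n}\sum_{j\neq i_k}\sum_{j'\neq i_{k'}}\Cov(\xi_{i_k j},\xi_{i_{k'}j'})\bigr)\Delta^{-1}$. For $k=k'$ this reproduces $\Delta^{-1}\tilde{\Sigma}(x_{i_k})\Delta^{-1}=\Sigma(x_{i_k})$. For $k\neq k'$, every summand with $j,j'\notin\{i_1,\dotsc,i_K\}$ vanishes: if $j\neq j'$ the factors are independent, while if $j=j'$ then conditioning on $X_j$ makes the two Bernoulli factors centered and conditionally independent, so the cross-expectation factorizes to zero exactly as in Corollary~\ref{cor:cltK}. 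The only surviving summands are the $O(1)$ boundary terms in which the two underlying edges coincide --- for instance $j=i_{k'}$ and $j'=i_k$, which share the single variable $A_{i_k i_{k'}}$ --- and each such term contributes a bounded $d\times d$ matrix; hence the entire $(k,k')$ block is $O(1/n)$ and drops out in the limit. The limiting covariance of the stacked vector is therefore block-diagonal with blocks $\Sigma(x_{i_k})$, so the blocks are asymptotically uncorrelated and, being jointly Gaussian, asymptotically independent, giving the linear-functional variance $\sum_k a_k^\top\Sigma(x_{i_k})a_k$ required in the first paragraph.

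Finally I would integrate over the realizations of $(X_{i_1},\dotsc,X_{i_K})$. Because the latent positions are i.i.d.\ and the conditional limit $\bigotimes_k\mathcal{N}(0,\Sigma(x_{i_k}))$ already factorizes, dominated convergence together with the independence of the $X_{i_k}$ yields $\prod_{k=1}^K\int_{\mathcal{X}}\Phi(z_k,\Sigma(x_k))\,dF(x_k)$, which is the claim. Everything outside the edge-coincidence covariance computation is a matrix-valued transcription of the one-dimensional sketch, with the spectral-norm bounds of Proposition~\ref{thm:oldBnd} through Lemma~\ref{lem:VThatV} absorbing the approximation errors.
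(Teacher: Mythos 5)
Your proposal is correct and takes essentially the same route as the paper: the paper proves the one-dimensional Corollary~\ref{cor:cltK} by Cram\'er--Wold plus the $O(1/n)$ cross-covariance bound and then asserts that Corollary~\ref{cor:cltK_multid} follows from Theorem~\ref{thm:clt_multid} ``in similar manners,'' which is exactly the matrix-valued transcription you carry out. If anything, your handling of the $j=j'$ summands (zero cross-covariance via conditional centering of the two Bernoulli factors, rather than outright independence) is slightly more careful than the paper's sketch, which glosses over that case.
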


\section{Simulations}
\label{sec:sim}
To illustrate Theorem~\ref{thm:clt_multid}, we
consider random graphs generated according to a
stochastic block model with parameters
\begin{equation}
  \label{eq:1}
  B = \begin{bmatrix} 0.42 & 0.42 \\ 0.42 & 0.5 \end{bmatrix}
  \quad \text{and} \quad \pi = (0.6,0.4).
\end{equation}
In this model, each node is either in block 1 (with probability 0.6)
or block 2 (with probability 0.4). Adjacency probabilities are
determined by the entries in $B$ based on the block memberships of the
incident vertices.  The above stochastic blockmodel corresponds to a random dot product
graph model in $\mathbb{R}^{2}$ where the distribution $F$ of the latent
positions is a mixture of point masses located at $x_1\approx (0.63,
-0.14)$ (with prior probability $0.6$) and $x_2\approx (0.69, 0.13)$
(with prior probability $0.4$).

We sample an adjacency matrix $A$ for graphs on $n$ vertices from the
above model for various choices of $n$.  For each graph $G$, let
$\hat{X}\in\Re^{n\times 2}$ denote the embedding of $A$ and let
$\hat{X}_i$ denote the $i$th row of $\hat{X}$. In
Figure~\ref{fig:clusplot}, we plot the $n$ rows of $\hat{X}$ for the
various choice of $n$.  The points are colored according to the block
membership of the corresponding vertex in the stochastic blockmodel. The ellipses show
the 95\% level curves for the distribution of $\hat{X}_i$ for each
block as specified by the limiting distribution, namely
the ellipse such that $\Pr[\hat{X}_i\in \mathrm{Ellipse}\
k|X_i=x_k]=.95$ for $k=1,2$.

\begin{figure*}[!htbp]
  \centering
  \subfloat[$n = 1000$]{
    \includegraphics[width=5.5cm]{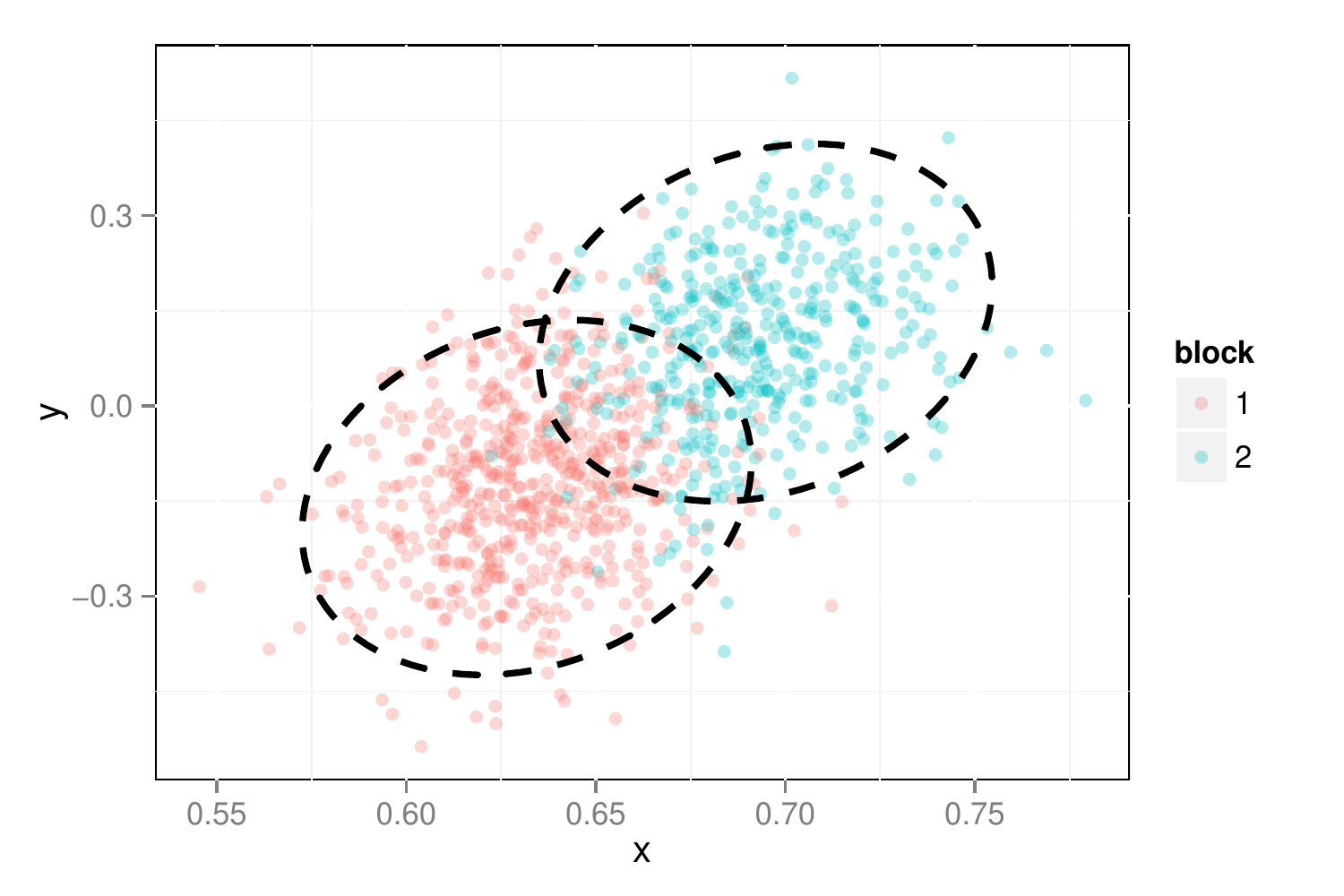}
  }
  \hfil
  \subfloat[$n = 2000$]{
    \includegraphics[width=5.5cm]{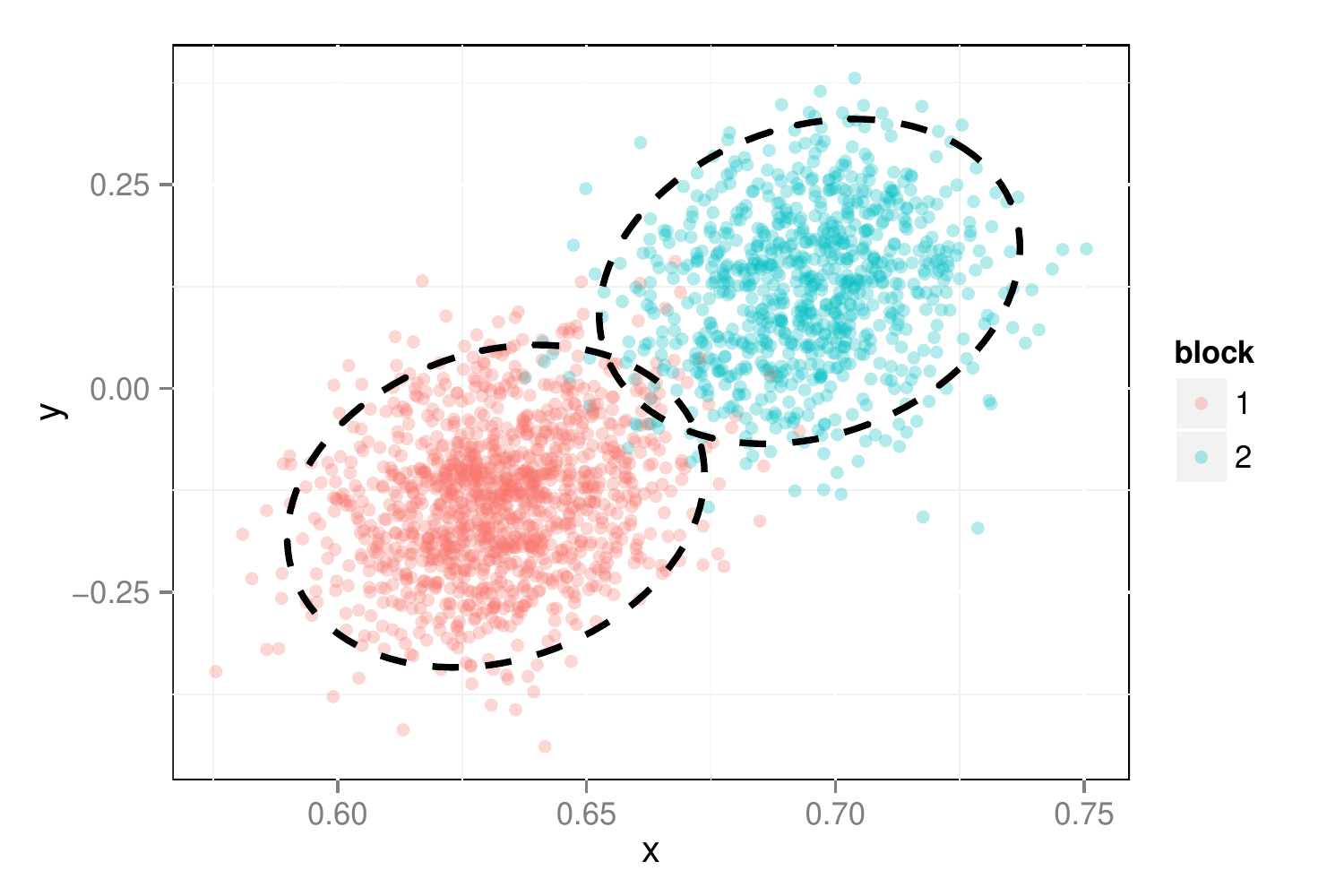}
  }
  \hfil
  \subfloat[$n = 4000$]{
    \includegraphics[width=5.5cm]{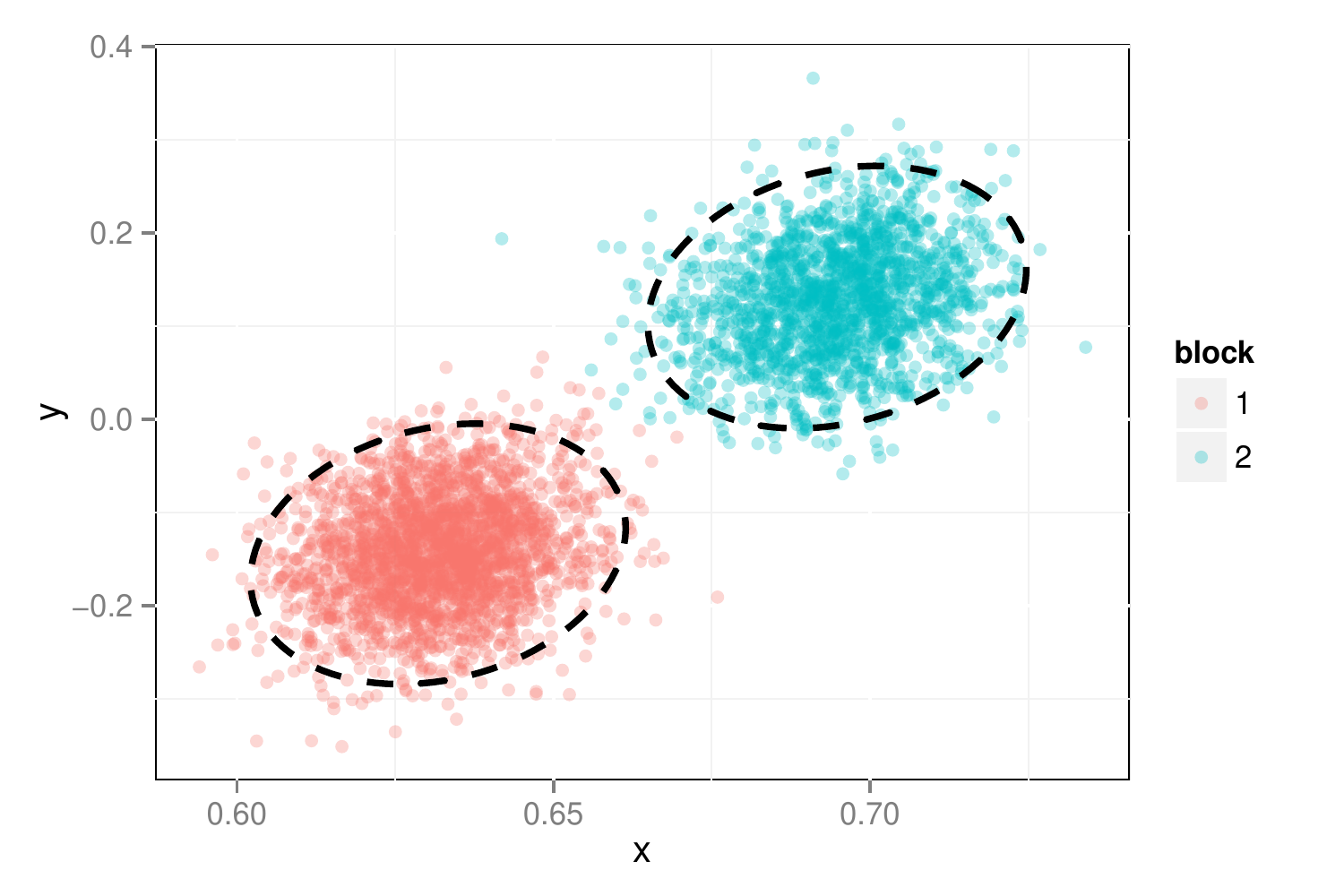}
    \label{fig5:subfig_scan}
  }
  \hfil
  \subfloat[$n = 8000$]{
    \includegraphics[width=5.5cm]{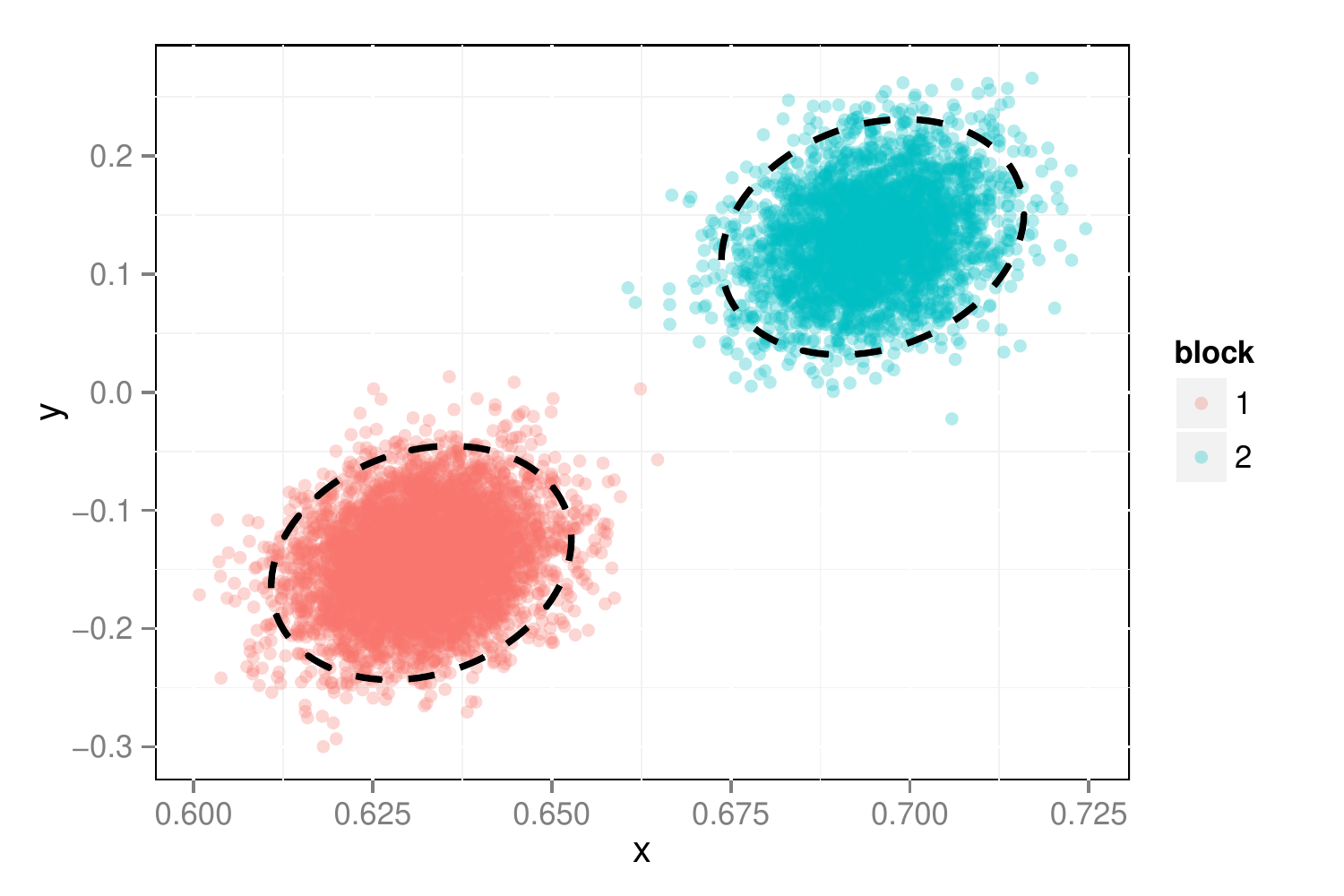}
  }
  \caption{Plot of the estimated latent positions  for $n
    \in \{1000,2000,4000,8000\}$. Dashed ellipses give the 95\% level
    curves for the distributions as specified in Theorem~\ref{thm:clt_multid}.}
  \label{fig:clusplot}
\end{figure*}
We then estimate the covariance matrices for the residuals. The
theoretical covariance matrices are given in the last line of
Table~\ref{tab:cov}, where $\Sigma_{1}$ and $\Sigma_{2}$ are the
covariance matrices for the residual $\sqrt{n}(\hat{X}_i - X_i)$ when
$X_i$ is from the first block and second block, respectively. The
empirical covariance matrices, denoted $\hat{\Sigma}_1$ and
$\hat{\Sigma}_2$, are computed by evaluating the sample covariance of
the rows of $\sqrt{n}\hat{X}_i$ corresponding to vertices in block 1
and 2 respectively.  The estimates of the covariance matrices are
given in Table~\ref{tab:cov}.  We see that as $n$ increases, the
sample covariances tend toward the specified limiting covariance
matrix given in the last row.

\begin{table}[htbp]
  \footnotesize
\begin{center}
\begin{tabular}{cccccc}
  $n$ &  2000 & 4000 & 8000 & 16000 & $\infty$  \\ \\ \midrule
$\hat{\Sigma}_1$ &%
$\begin{bmatrix} 0.58 & 0.54 \\ 0.54 & 16.56 \end{bmatrix}$ &%
$\begin{bmatrix} 0.58 & 0.63 \\ 0.63 & 14.87 \end{bmatrix}$ &%
$\begin{bmatrix} 0.60 & 0.61 \\ 0.61 & 14.20 \end{bmatrix}$ &%
$\begin{bmatrix} 0.59 & 0.58 \\ 0.58 & 13.96 \end{bmatrix}$ &%
$\begin{bmatrix} 0.59 & 0.55 \\ 0.55 & 13.07 \end{bmatrix}$ \\
\midrule
$\hat{\Sigma}_2$ &%
$\begin{bmatrix} 0.58 & 0.75 \\ 0.75 & 16.28 \end{bmatrix}$ &%
$\begin{bmatrix} 0.59 & 0.71 \\ 0.71 & 15.79 \end{bmatrix}$ &%
$\begin{bmatrix} 0.58 & 0.54 \\ 0.54 & 14.23 \end{bmatrix}$ &%
$\begin{bmatrix} 0.61 & 0.69 \\ 0.69 & 13.92 \end{bmatrix}$ &%
$\begin{bmatrix} 0.60 & 0.59 \\ 0.59 & 13.26 \end{bmatrix}$ \\ \midrule
\end{tabular}
\end{center}
\caption{The sample covariance matrices for $\sqrt{n}(\hat{X}_i-X_i)$
  for each block in a stochastic blockmodel with two blocks. Here
  $n \in \{2000,4000,8000,16000\}$. The
  last column are the theoretical covariance matrices for the limiting
  distribution.}
\label{tab:cov}
\end{table}


We also investigate the effects of the multivariate normal distribution
as specified in Theorem~\ref{thm:clt_multid} on inference procedures. It is shown in
\citet{STFP-2011,sussman2012universally} that the approach of
embedding a graph into some Euclidean space, followed by inference
(for example, clustering or classification) in that space can be
consistent. However, these consistency results are, in a sense, only
first-order results. In particular, they demonstrate only that the
error of the inference procedure converges to $0$ as the number of
vertices in the graph increases. We now illustrate how
Theorem~\ref{thm:clt_multid} may lead to a more refined error analysis.

We construct a sequence of random graphs on $n$ vertices, where $n$
ranges from $1000$ through $4000$ in increments of $250$, following
the stochastic blockmodel with parameters as given above in
Eq.~\eqref{eq:1}. For each graph $G_n$ on $n$ vertices, we embed $G_n$
and cluster the embedded vertices of $G_n$ via Gaussian mixture
model and K-Means. Gaussian mixture model-based clustering
was done using the MCLUST
implementation of \citep{fraley99:_mclus}.
We then measure the classification error of the
clustering solution. We repeat this procedure 100 times to obtain an
estimate of the misclassification rate. The results are plotted in
Figure~\ref{fig:gmm_kmeans_bayes}. For comparison, we also plot the
Bayes optimal classification error rate under the assumption that the
embedded points do indeed follow a multivariate normal mixture with
covariance matrices $\Sigma_1$ and $\Sigma_2$ as given above in the
last line of Table~\ref{tab:cov}. We also plot the misclassification
rate of $(C \log{n})/n$ as given in \citet{STFP-2011}
where the constant $C$ was chosen to match the misclassification rate
of $K$-means clustering for $n = 1000$. For the number of
vertices considered here, the upper bound for
the constant $C$ from \citet{STFP-2011} will give a vacuous upper
bound of the order of $10^6$ for the misclassification rate in this
example.
\begin{figure}[htbp]
  \centering
  \includegraphics[width=0.7\textwidth]{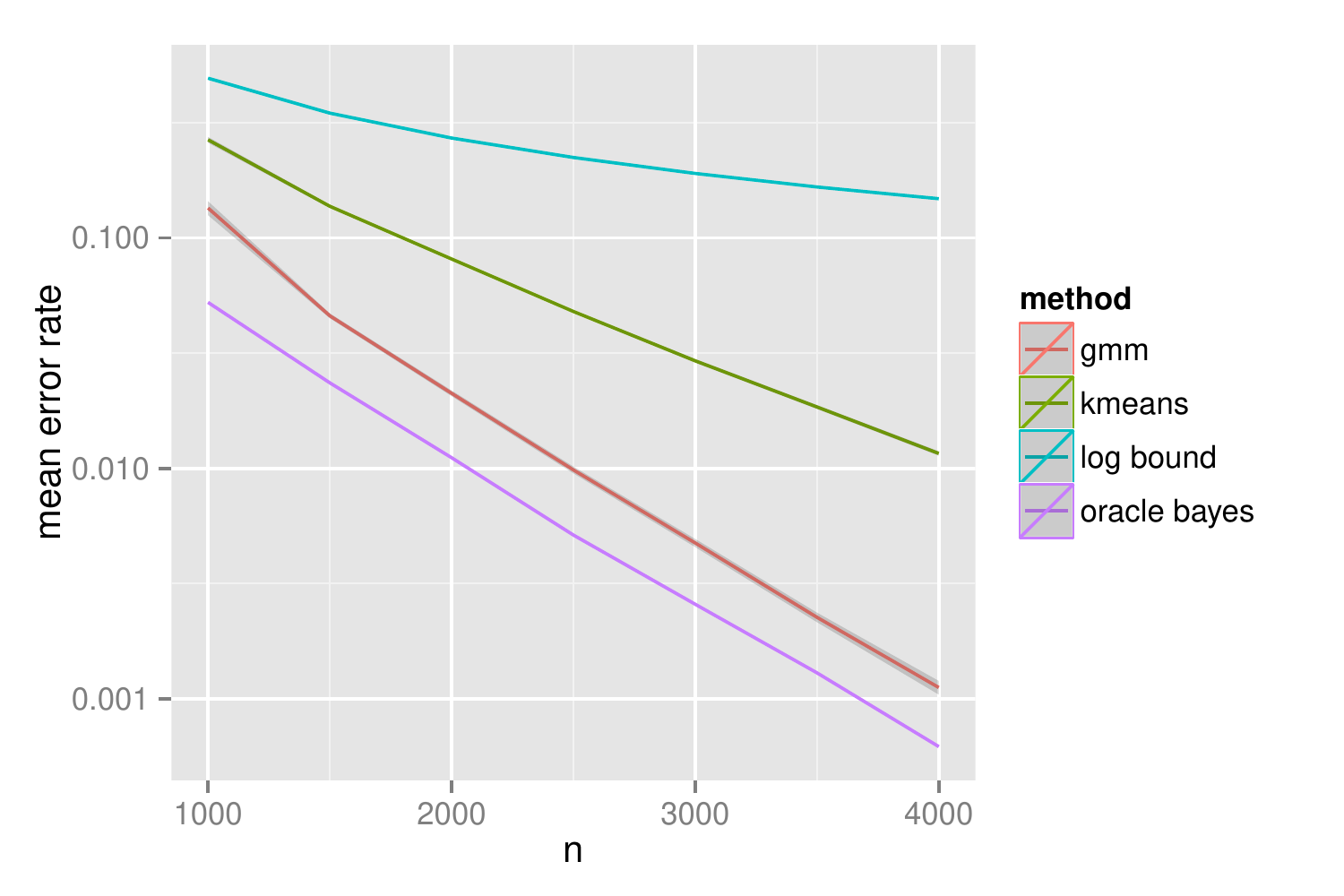}
  \caption{Comparison of classification error for Gaussian mixture model, K-Means,
    Bayes optimal error rate. The classification errors for each $n \in
    \{1000,1250,1500, \dots, 4000\}$ were obtained by averaging 100
    Monte Carlo iterations and are plotted on a $\log_{10}$ scale. The plot indicates that the assumption of mixture of
    multivariate normals can yield non-negligible
    improvement in the inference procedure. The log-bound curve shows an upper bound on the error rate as derived in \citet{STFP-2011}.}
  \label{fig:gmm_kmeans_bayes}
\end{figure}

\section{Discussion}
\label{sec:disc}
Our demonstration of the clustering accuracy in \S~\ref{sec:MultiD} shows how our
Theorem~\ref{thm:clt_multid} may impact statistical inference for random graphs. First,
we see that the empirical error rates are much lower than those proved
in previous work on spectral methods
\citep{rohe2011spectral,STFP-2011,fishkind2013consistent}. Indeed,
for both the $K$-Means algorithm and Gaussian mixture model, the average clustering
error decreases at an exponential rate as opposed to the $\log(n)/n$
bounds shown in previous work. Furthermore, the rate of decrease
for Gaussian mixture model-based
clustering closely mirrors the Bayes optimal error rate that
would be achieved if the estimated latent positions were exactly
distributed according to the multivariate normal distribution and the
parameters of this distribution were known.

These results suggest that further investigations using our theorem
could lead to much more accurate bounds on the empirical error rates
for adjacency spectral clustering. We believe that extending
Corollary~\ref{cor:cltK_multid} to the case in which $K$ is growing
with $n$, e.g., $K = n$, and
further work regarding distributions of spectral statistics for
stochastic blockmodels will lead to foundational statistical
procedures analogous to the results on estimation, hypothesis testing,
and clustering in the setting of mixtures of normal distributions in
Euclidean space. The relatively simple nature of
our spectral procedure allows for computationally efficient
statistical methodology.

Extensions of this work to a wider class of exchangeable graphs are
also of interest. Though not all exchangeable random graphs can be
represented as random dot product graphs, random dot product graphs
can approximate any exchangeable graph in the following sense: given a
sufficiently regular link function, there exists a {\em feature map}
from the original latent position space to $\ell_2$, such that the
link function applied to the original latent positions is equal to the
inner product applied to the feature-mapped positions in
$\ell_2$. \citet{tang2012universally} argue that by increasing the
dimension of the estimated latent positions, it is possible to
estimate these feature-mapped latent positions in a way that allows
for consistent subsequent inference. Though this larger class of
models is not considered here, we believe this is strong motivation to
study the random dot product graph model and its eigenvalues and
eigenvectors.

\bibliographystyle{plainnat}
\bibliography{EvecCLT_MultiD_oned_separate}

\end{document}